\documentclass[authoryear,preprint,12pt]{elsarticle}
\journal{Mathematical Methods of Statistics}

%% Elsevier packages
\usepackage{amsmath,amsfonts,amssymb,amsthm,graphicx,hyperref,url}
\usepackage[labelfont=bf]{caption}

\setlength{\bibsep}{0pt}

%% Author packages
\usepackage[inline]{enumitem} % to correctly label and reference the assumptions
\usepackage{xcolor} % to color up the comments
\usepackage{dsfont} % for the \ind command
\usepackage{subcaption} % for subfigures
\usepackage{float} % for better placement of figures
\usepackage{csvsimple} % to import and typeset tables directly from .csv files
\usepackage{booktabs} % to produce professional-quality tables with refined rules
\usepackage{geometry} % to control the margins
\geometry{top=0.9in,bottom=1.1in,left=0.8in,right=0.8in}

\theoremstyle{plain}% Theorem-like structures provided by amsthm.sty
\newtheorem{theorem}{Theorem}

\newtheorem{lemma}{Lemma}

\newtheorem{remark}{Remark}

%% Author commands
\newcommand{\N}{\mathbb{N}}

\newcommand{\R}{\mathbb{R}}
 % Russian style (don't change)
\newcommand{\EE}{\mathsf{E}} % Russian style (don't change)
 % Russian style (don't change)
\newcommand{\Var}{\mathsf{Var}} % Russian style (don't change)
\newcommand{\Cov}{\mathsf{Cov}} % Russian style (don't change)
\newcommand{\bb}[1]{\boldsymbol{#1}}
\newcommand{\OO}{\mathcal{O}}
\newcommand{\oo}{\mathrm{o}}
\newcommand{\rd}{\mathrm{d}}
\newcommand{\ind}{\mathds{1}}

\newcommand{\e}{\varepsilon}
\newcommand{\argmax}{\operatorname*{arg\,max}}
\newcommand{\argmin}{\operatorname*{arg\,min}}

\allowdisplaybreaks

\begin{document}

\begin{frontmatter}

\title{Dirichlet kernel density estimation for strongly mixing sequences\\ on the simplex}

\author[a1]{Hanen Daayeb}
\author[a1]{Salah Khardani}
\author[a2]{Fr\'ed\'eric Ouimet\corref{mycorrespondingauthor}}

\address[a1]{D\'epartement de math\'ematiques, Universit\'e de Tunis El Manar, Tunisia}
\address[a2]{D\'epartement de math\'ematiques, Universit\'e du Qu\'ebec \`a Trois-Rivi\`eres, Canada}

\cortext[mycorrespondingauthor]{Corresponding author. Email address: frederic.ouimet2@uqtr.ca}

\begin{abstract}
This paper investigates the theoretical properties of Dirichlet kernel density estimators for compositional data supported on simplices, for the first time addressing scenarios involving time-dependent observations characterized by strong mixing conditions. We establish rigorous results for the asymptotic normality and mean squared error of these estimators, extending previous findings from the independent and identically distributed (iid) context to the more general setting of strongly mixing processes. To demonstrate its practical utility, the estimator is applied to monthly market-share compositions of several Renault vehicle classes over a twelve-year period, with bandwidth selection performed via leave-one-out least squares cross-validation. Our findings underscore the reliability and strength of Dirichlet kernel techniques when applied to temporally dependent compositional data.
\end{abstract}

\begin{keyword} % alphabetical order
Asymmetric kernel \sep asymptotic normality \sep compositional data \sep density estimation \sep nonparametric estimation \sep simplex \sep stationary process \sep strong mixing \sep strongly mixing process \sep time-dependent data
\MSC[2020]{Primary: 62G07; Secondary: 60G10, 60F05, 62G05, 62G20, 62H10, 62H20}
\end{keyword}

\end{frontmatter}

\section{Introduction}\label{sec:introduction}

Kernel density estimation is a fundamental task in statistical analysis, aiming to infer the underlying distribution of observed data without imposing restrictive parametric assumptions. While classical multivariate kernel density estimators (KDEs), such as those using Gaussian kernels, have proven effective for data supported on unbounded spaces, as documented extensively in the literature \citep[see, e.g.,][]{MR1319818}, their direct application to compact domains introduces significant boundary bias. This bias arises from the kernel's spillover effect, assigning non-negligible mass outside the domain boundaries, and boundary correction methods can often lead to negative density estimates near the edges of the support.

To address these boundary issues, asymmetric kernels have emerged as powerful tools, inherently adapting their shapes to match the geometry of the boundaries and thus ensuring nonnegativity of density estimates throughout the entire support. Among the univariate asymmetric kernels, beta kernels on $[0,1]$ \citep[e.g.,][]{MR1718494,MR1985506,MR2775207,MR3333996,MR3463548} and gamma kernels on $[0,\infty)$ \citep[e.g.,][]{MR1794247,MR2179543,MR2756423,MR3843043} have attracted particular attention thanks to their intrinsic local adaptivity and excellent boundary performance. Similarly, in the multivariate setting, product kernels on product spaces \citep{MR4415422,MR2568128,MR3384258}, multivariate inverse Gaussian kernels on half-spaces \citep{MR4939549}, Wishart kernels on the cone of positive definite matrices \citep{MR4358612}, and Dirichlet kernels on simplices \citep{doi:10.2307/2347365,MR4319409,MR4544604} extend these boundary-adaptive properties to higher-dimensional supports, carrying over the same bias-mitigating and nonnegativity-preserving behavior despite the greater geometric complexity. A unified theoretical treatment of many such asymmetric kernels is provided by the associated kernel framework; see, for instance, \citet{MR3760293,doi:10.3390/stats4010013}, \citet{MR4859217}, and \citet{doi:10.1080/03610926.2025.2530135}.

Dirichlet KDEs have recently demonstrated notable advantages. Their shape parameters are designed to adapt locally according to the estimation point, significantly reducing boundary bias compared to classical symmetric kernels, and ensure density nonnegativity everywhere within the support \citep{MR4319409}. Furthermore, they have been proven to attain minimax optimal rates under various smoothness conditions on the simplex, highlighting their theoretical efficiency \citep{MR4544604}.

Recent developments have extended the scope of Dirichlet kernel methods beyond density estimation to regression problems on simplices. For instance, \citet{MR4796622} introduced a Dirichlet kernel-based Nadaraya--Watson estimator, analyzing its theoretical properties within the broader framework of conditional U-statistics. \citet{MR4905615} proposed a local linear smoother employing Dirichlet kernels, demonstrating superior performance compared to the Nadaraya--Watson approach. More recently, \citet{arXiv:2502.08461} examined a Dirichlet kernel adaptation of the Gasser--M\"uller estimator, establishing its asymptotic properties and providing a comprehensive performance comparison with both previously mentioned regression methods.

Despite the growing literature on Dirichlet kernel methods for density estimation, existing studies predominantly focus on independent and identically distributed (iid) data. In practice, however, dependence between observations frequently occurs, such as temporal or spatial dependencies, captured through mixing conditions. The theoretical properties of Dirichlet KDEs under dependence scenarios, such as strong mixing conditions, have not yet been explored.

The present paper extends the theory of Dirichlet KDEs by investigating their performance for strongly mixing sequences supported on the simplex. Specifically, we analyze the asymptotic normality and mean squared error (MSE) of these estimators, providing rigorous theoretical results that generalize some of the previous work done by \citet{MR4319409} in the iid context. This study thus fills an essential gap in the statistical literature, paving the way for more robust and practically relevant density estimation methods applicable to a wide array of time-dependent compositional data scenarios.

The paper is organized as follows. Section~\ref{sec:definitions.notations} introduces the necessary definitions and notations, including the Dirichlet kernel and the strong mixing condition considered throughout the paper. Section~\ref{sec:asymptotic.properties} presents the main theoretical results, detailing the MSE and the asymptotic normality of the Dirichlet KDE under strong mixing. Section~\ref{sec:application} provides a real-data illustration, applying the smoothing method to estimate the marginal density of monthly market-share compositions of several Renault vehicle classes evolving over a twelve-year period, with bandwidth selection via leave-one-out Monte Carlo least-squares cross-validation. The proofs of the main theoretical results are given in Section~\ref{sec:proofs}, and technical lemmas supporting those proofs appear in Section~\ref{sec:tech.lemmas}.

\section{Definitions and notations}\label{sec:definitions.notations}

For any integer $d\in \N = \{1,2,\ldots\}$, the $d$-dimensional simplex and its interior are defined by
\[
\mathcal{S}_d = \{\bb{s}\in [0,1]^d: \rVert \bb{s}\rVert_1 \leq 1\}, \qquad \mathrm{Int}(\mathcal{S}_d) = \{\bb{s}\in (0,1)^d: \rVert \bb{s}\rVert_1 < 1\},
\]
where $\lVert \bb{s}\rVert_1 = \sum_{i=1}^d \lvert s_i\rvert$ denotes the $\ell^1$ norm in $\R^d$. For any $u_1,\ldots,u_d,v\in (0,\infty)$, the density of the $\mathrm{Dirichlet}\hspace{0.2mm}(\bb{u},v)$ distribution is given, for every $\bb{s}\in \mathcal{S}_d$, by
\[
K_{\bb{u},v}(\bb{s}) = \frac{\Gamma(\rVert \bb{u}\rVert_1 + v)}{\Gamma(v) \prod_{i=1}^d \Gamma(u_i)} (1 - \rVert \bb{s}\rVert_1)^{v - 1} \prod_{i=1}^d s_i^{u_i - 1}.
\]

Consider a sequence $\bb{X}_1,\ldots,\bb{X}_n$ of $\mathcal{S}_d$-valued random vectors that may be dependent across the index $i$ while each individual vector $\bb{X}_i$ remains, by construction, a composition on the simplex. Each $\bb{X}_i$ has an unknown density $f$, called the target density, with support entirely in $\mathcal{S}_d$. Such sequences arise, for instance, as the first $n$ observations from a stationary stochastic process $(\bb{X}_t)_{t\in \N}$ taking values in $\mathcal{S}_d$, with time dependence typically restricted by conditions like strong mixing. The case where $\bb{X}_1,\ldots,\bb{X}_n$ are independent and identically distributed (iid) is of course included as a special case. Given a bandwidth $b \in (0,\infty)$, the Dirichlet KDE of $f$ at $\bb{s}\in \mathcal{S}_d$ is defined by
\begin{equation}\label{eq:Dirichlet.KDE}
\hat{f}_{n,b}(\bb{s}) = \frac{1}{n} \sum_{i=1}^n \kappa_{\bb{s},b}(\bb{X}_i),
\end{equation}
where, for brevity, $\kappa_{\bb{s},b}(\cdot) = K_{\bb{s}/b + \bb{1}, (1 - \rVert \bb{s}\rVert_1)/b + 1}(\cdot)$ with $\bb{1} = (1,\ldots,1)$ being the $d$-vector of ones.

Here are some notation conventions we will use throughout the rest of the paper. The notation $u = \OO(v)$ means that $\limsup \rvert u/v\rvert \leq C < \infty$ as $b\to 0$ or $n\to \infty$, depending on the context. The positive constant $C$ can depend on the target density $f$ and the dimension $d$, but no other variable unless explicitly written as a subscript. The most common occurrence is a local dependence of the asymptotics with a given point $\bb{s}$ on the simplex, in which case we write $u = \OO_{\bb{s}}(v)$. The notation $u\ll v$ is also used sometimes to mean $u = \OO(v)$ and $u,v\geq 0$, with subscripts again indicating dependence. The notation $u = \oo(v)$ means that $\lim \lvert u/v\rvert = 0$ as $b\to 0$ or $n\to \infty$, and subscripts indicate which parameters the convergence rate can depend on. We use the shorthand $[d] = \{1,\ldots,d\}$ in several places. The bandwidth parameter $b = b(n)$ is always implicitly a function of the number of observations, the only exception being in Section~\ref{sec:tech.lemmas}. To quantify dependence between observations, we use the standard strong mixing coefficient defined by
\[
\alpha(n) = \sup_{k \geq 1} \, \sup_{A \in \mathcal{F}_1^k(\bb{X}), \, B \in \mathcal{F}_{k+n}^{\infty}(\bb{X})} \rvert\mathbb{P}(A \cap B) - \mathbb{P}(A)\mathbb{P}(B)\rvert,
\]
where $\mathcal{F}_i^j(\bb{X})$ denotes the $\sigma$-algebra generated by the random variables $(\bb{X}_i,\ldots,\bb{X}_j)$; see, e.g., \citet[p.~18]{MR1640691}. A sequence $(\bb{X}_i)_{i\in \N}$ is said to be strongly mixing if $\alpha(n) \to 0$ as $n\to \infty$.

\section{Convergence properties}\label{sec:asymptotic.properties}

For each result in this section, the following assumptions will be used:

\begin{enumerate}[label=A\arabic*]\setlength\itemsep{0em}
\item The density $f$ is twice continuously differentiable on $\mathcal{S}_d$. \label{ass:1}
\item For some $p\in (2,\infty)$, the local dependence function, $h_{i,j} = f_{\bb{X}_i,\bb{X}_j} - f_{\bb{X}_i} f_{\bb{X}_j}$, satisfies
\[
\sup_{i\neq j} \rVert h_{i,j}\rVert_p \equiv \sup_{i\neq j} \left\{\int_{\mathcal{S}_d \times \mathcal{S}_d} \lvert h_{i,j}(\bb{x},\bb{x}')\rvert^p \rd \bb{x} \rd \bb{x}'\right\}^{1/p} < \infty.
\] \label{ass:2}
\item For $p\in (2,\infty)$, the mixing coefficient satisfies $\alpha(n) \ll n^{-\nu}$ for some $\nu > 2(p - 1)/(p - 2)$. \label{ass:3}
\item The density $f$ is Lipschitz continuous on $\mathcal{S}_d$. \label{ass:4}
\item The bandwidth $b=b(n)$ satisfies $b\to 0$, $n b^{d/2}\to \infty$ and $n^{1/2} b^{d/4 + 1/2}\to 0$ as $n\to \infty$. \label{ass:5}
\end{enumerate}

\begin{remark}
Assumption~\ref{ass:1} ensures that a second-order Taylor expansion of $f$ is valid in a neighborhood of any interior point, which is indispensable for deriving the leading‐order bias term in the proof of Theorem~\ref{thm:MSE}. Assumptions~\ref{ass:2} and~\ref{ass:3} together control the strength of temporal dependence: the $L^{p}$ bound on the local covariance kernel $h_{i,j}$ and the polynomial decay $\alpha(n)\ll n^{-\nu}$ guarantee that covariance terms are summable and allow blocking arguments to deliver a central limit theorem; see the proof of Theorem~\ref{thm:CLT} for details. Assumption~\ref{ass:4} is minimal for controlling the bias term in the central limit theorem and is automatically satisfied when $f$ has continuous first-order partial derivatives on the simplex. Finally, the bandwidth regime $n b^{d/2}\to \infty$ in Assumption~\ref{ass:5} guarantees that the variance of the estimator at $\bb{s}$ is $\oo_{\bb{s}}(1)$, while the extra condition $n^{1/2}b^{d/4+1/2}\to0$ guarantees that the rescaled bias vanishes in the asymptotic normality result. Taken together, these assumptions are weak enough to cover a wide range of strongly mixing processes commonly encountered with compositional time series.
\end{remark}

First, we establish the asymptotic behavior of the MSE of the Dirichlet KDE.

\begin{theorem}[Mean squared error]\label{thm:MSE}
Suppose that Assumptions~\ref{ass:1}--\ref{ass:3} hold. For any $\bb{s}\in \mathrm{Int}(\mathcal{S}_d)$, we have, as $n\to \infty$,
\[
\begin{aligned}
\mathrm{MSE}[\hat{f}_{n,b}(\bb{s})]
&\equiv \EE\big\{\rvert\hat{f}_{n,b}(\bb{s}) - f(\bb{s})\rvert^2\big\} \\
&= b^2 g^2(\bb{s}) + n^{-1} b^{-d/2} \psi(\bb{s}) f(\bb{s}) + \oo(b^2) + \oo_{\bb{s}}(n^{-1} b^{-d/2}),
\end{aligned}
\]
where
\[
\begin{aligned}
g(\bb{s})
&= \sum_{i\in [d]} (1 - (d + 1)s_i) \frac{\partial}{\partial s_i} f(\bb{s}) + \frac{1}{2} \sum_{i,j\in [d]} s_i (\ind_{\{i = j\}} - s_j) \frac{\partial^2}{\partial s_i \partial s_j} f(\bb{s}), \\
\psi(\bb{s})
&= \frac{(4\pi)^{-d/2}}{(1 - \rVert \bb{s}\rVert_1)^{1/2} \prod_{i\in [d]} s_i^{1/2}}.
\end{aligned}
\]
\end{theorem}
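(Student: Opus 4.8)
The plan is to decompose the MSE in the standard way as the sum of the squared bias and the variance,
\[
\mathrm{MSE}[\hat f_{n,b}(\bb{s})] = \big(\EE[\hat f_{n,b}(\bb{s})] - f(\bb{s})\big)^2 + \Var[\hat f_{n,b}(\bb{s})],
\]
and to analyze each piece separately. For the bias, stationarity gives $\EE[\hat f_{n,b}(\bb{s})] = \EE[\kappa_{\bb{s},b}(\bb{X}_1)] = \int_{\mathcal{S}_d} \kappa_{\bb{s},b}(\bb{x}) f(\bb{x}) \, \rd\bb{x}$, which is exactly the quantity appearing in the iid analysis. I would therefore recycle the iid bias expansion: interpret $\kappa_{\bb{s},b}$ as the density of a $\mathrm{Dirichlet}(\bb{s}/b + \bb{1}, (1-\lVert\bb{s}\rVert_1)/b+1)$ random vector $\bb{\xi}_{\bb{s},b}$, note that its mean is $\frac{\bb{s}+b\bb{1}}{1+(d+1)b}$ and that its componentwise covariances are $\OO(b)$, perform a second-order Taylor expansion of $f$ around $\bb{s}$ (valid by Assumption~\ref{ass:1}) inside $\EE[f(\bb{\xi}_{\bb{s},b})]$, and collect terms. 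The first-order term produces $b\sum_i(1-(d+1)s_i)\partial_i f(\bb{s})$, the second-order term produces $\tfrac{b}{2}\sum_{i,j}s_i(\ind_{\{i=j\}}-s_j)\partial^2_{ij}f(\bb{s})$, and the remainder is $\oo(b)$ uniformly by continuity of the second derivatives on the compact simplex; squaring gives $b^2 g^2(\bb{s}) + \oo(b^2)$.

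For the variance, I would write
\[
\Var[\hat f_{n,b}(\bb{s})] = \frac{1}{n}\Var[\kappa_{\bb{s},b}(\bb{X}_1)] + \frac{2}{n^2}\sum_{1\le i<j\le n}\Cov[\kappa_{\bb{s},b}(\bb{X}_i), \kappa_{\bb{s},b}(\bb{X}_j)].
\]
The diagonal term is again the iid contribution: $\Var[\kappa_{\bb{s},b}(\bb{X}_1)] = \EE[\kappa_{\bb{s},b}^2(\bb{X}_1)] - (\EE[\kappa_{\bb{s},b}(\bb{X}_1)])^2$, the squared mean is $\OO_{\bb{s}}(1)$, and the second moment behaves like $b^{-d/2}\psi(\bb{s})f(\bb{s})(1+\oo_{\bb{s}}(1))$ — this is precisely the $L^2$-norm asymptotics of the Dirichlet kernel that should be available from the cited iid results or from Lemma~\ref{lem:kappa.L.q.norm}; the constant $(4\pi)^{-d/2}(1-\lVert\bb{s}\rVert_1)^{-1/2}\prod_i s_i^{-1/2}$ comes from a Laplace-type expansion of the integral $\int \kappa_{\bb{s},b}^2(\bb{x})\,\rd\bb{x}$ around its interior maximum. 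So $\tfrac1n\Var[\kappa_{\bb{s},b}(\bb{X}_1)] = n^{-1}b^{-d/2}\psi(\bb{s})f(\bb{s}) + \oo_{\bb{s}}(n^{-1}b^{-d/2})$.

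The main obstacle is showing that the covariance double sum is negligible, i.e. $\tfrac{2}{n^2}\sum_{i<j}\Cov[\kappa_{\bb{s},b}(\bb{X}_i),\kappa_{\bb{s},b}(\bb{X}_j)] = \oo_{\bb{s}}(n^{-1}b^{-d/2})$. Here I would split the sum at a threshold $d_n\to\infty$ (to be chosen, growing like a suitable power of $n$ or $1/b$). For \emph{near} pairs $|i-j|\le d_n$ I would bound the covariance using the local dependence function: $\Cov[\kappa_{\bb{s},b}(\bb{X}_i),\kappa_{\bb{s},b}(\bb{X}_j)] = \int\int \kappa_{\bb{s},b}(\bb{x})\kappa_{\bb{s},b}(\bb{x}') h_{i,j}(\bb{x},\bb{x}')\,\rd\bb{x}\,\rd\bb{x}'$, apply Hölder with the exponent $p$ from Assumption~\ref{ass:2} to get a bound of the order $\|\kappa_{\bb{s},b}\|_{p/(p-1)}^2 \cdot \sup_{i\neq j}\|h_{i,j}\|_p$, and invoke Lemma~\ref{lem:kappa.L.q.norm} for the $L^{p/(p-1)}$-norm of the kernel, which grows like a negative power of $b$; this yields a contribution of order $n^{-1} d_n \cdot b^{-\gamma}$ for some explicit $\gamma = \gamma(p,d)$. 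For \emph{far} pairs $|i-j|> d_n$ I would use Davydov's covariance inequality together with the $L^p$ bound on $\kappa_{\bb{s},b}(\bb{X}_i)$ (again from Lemma~\ref{lem:kappa.L.q.norm}, since these are bounded functions of the $\bb{X}_i$) and the mixing decay $\alpha(k)\ll k^{-\nu}$, giving a bound like $n^{-1}\sum_{k>d_n} \alpha(k)^{1-2/p}\cdot b^{-\gamma'} \ll n^{-1} d_n^{-(\nu(1-2/p)-1)} b^{-\gamma'}$. The exponent condition $\nu > 2(p-1)/(p-2)$ in Assumption~\ref{ass:3} is exactly what makes $\nu(1-2/p) - 1 > 0$, so the far-pair sum is genuinely summable; then one balances $d_n$ so that both pieces are $\oo_{\bb{s}}(n^{-1}b^{-d/2})$. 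Verifying that the powers of $b$ actually cooperate — i.e. that the $b^{-\gamma}$ and $b^{-\gamma'}$ factors are more than compensated by the gains from $n^{-1}$ and the choice of $d_n$ — is the delicate bookkeeping step, but it is forced by the assumptions and mirrors classical arguments for mixing KDEs (Bosq-type blocking). Assembling the squared bias and the variance then gives the stated expansion.
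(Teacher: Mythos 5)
Your overall architecture matches the paper's proof: bias--variance decomposition with the covariance double sum split at a lag threshold, recycling of the iid bias and variance expansions of \citet{MR4319409}, and control of the near pairs via H\"older's inequality, Assumption~\ref{ass:2} and the $L^{p/(p-1)}$-norm asymptotics of Lemma~\ref{lem:kappa.L.q.norm}. The genuine gap is in your far-pair bound. You propose Davydov's inequality with $L^p(\PP)$ norms of $\kappa_{\bb{s},b}(\bb{X}_i)$, i.e.\ a bound of the form $\lvert\Cov\rvert \ll \alpha(k)^{1-2/p}\,\lVert\kappa_{\bb{s},b}(\bb{X})\rVert_p^2 \asymp \alpha(k)^{1-2/p}\, b^{-d(p-1)/p}$, since $\EE\{\kappa_{\bb{s},b}^p(\bb{X})\} \ll \lVert f\rVert_\infty \int_{\mathcal{S}_d}\kappa_{\bb{s},b}^p(\bb{x})\,\rd\bb{x} \asymp b^{-(p-1)d/2}$ by Lemma~\ref{lem:kappa.L.q.norm}. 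Summing over lags $k>d_n$ and dividing by $n^2$ gives a contribution $\ll n^{-1} d_n^{\,1-\nu(p-2)/p}\, b^{-d(p-1)/p}$; requiring this to be $\oo_{\bb{s}}(n^{-1}b^{-d/2})$ forces $d_n^{\,\nu(p-2)/p-1} \gg b^{-d(p-2)/(2p)}$, while your near-pair bound $n^{-1}d_n b^{-d/p}$ forces $d_n \ll b^{-d(p-2)/(2p)}$. These two requirements are compatible only if $\nu(p-2)/p - 1 > 1$, i.e.\ $\nu > 2p/(p-2)$, which is strictly stronger than the assumed $\nu > 2(p-1)/(p-2)$ (for $p=3$, your scheme needs $\nu>6$ whereas Assumption~\ref{ass:3} only gives $\nu>4$). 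Mere summability of $\alpha(k)^{1-2/p}$, which is what you invoke, is not enough; the ``delicate bookkeeping'' you defer does not in fact close under the stated assumptions, so this step would fail as written.

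The paper closes it by exploiting boundedness rather than $L^p$ moments for the far pairs: Billingsley's covariance inequality gives $\lvert\Cov\{\kappa_{\bb{s},b}(\bb{X}_i),\kappa_{\bb{s},b}(\bb{X}_j)\}\rvert \leq 4\lVert\kappa_{\bb{s},b}\rVert_\infty^2\,\alpha(\lvert i-j\rvert) \ll b^{-d}\alpha(\lvert i-j\rvert)$ via the sup-norm bound $\lVert\kappa_{\bb{s},b}\rVert_\infty \ll b^{-d/2}\psi(\bb{s})$ of Lemma~\ref{lem:local.bound} --- that is, the full exponent $1$ on $\alpha$ at the cost of a worse power of $b$. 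With the separator $\beta_n = \min\{b^{-d/(\nu q)}, n-1\}$, where $q$ is the conjugate of $p$, both the near- and far-pair totals are $\OO_{\bb{s}}\big(n\, b^{-d\{1/p+1/(\nu q)\}}\big)$, and the inequality $1/p + 1/(\nu q) < 1/2$ is exactly equivalent to $\nu > 2(p-1)/(p-2)$, i.e.\ to Assumption~\ref{ass:3}. Replace your far-pair step by this bounded-variable covariance inequality (Lemma~\ref{lem:local.bound} plus Billingsley/Ibragimov), and the rest of your plan goes through as in the paper.
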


Next, we establish the asymptotic normality of the estimator, characterizing its distributional behavior.

\begin{theorem}[Asymptotic normality and plug-in confidence interval]\label{thm:CLT}
Suppose Assumptions~\ref{ass:2}--\ref{ass:5} hold and assume further that $\nu\geq (d+4)/2$ in Assumption~\ref{ass:3}. For any $\bb{s}\in \mathrm{Int}(\mathcal{S}_d)$,
\[
n^{1/2} b^{d/4} \bigl\{\hat{f}_{n,b}(\bb{s})-f(\bb{s})\bigr\}
\xrightarrow{\mathrm{law}} \mathcal{N}(0,\psi(\bb{s})\,f(\bb{s})),
\qquad n\to\infty.
\]
In particular, a plug-in asymptotic $(1-\alpha)$ confidence interval for $f(\bb{s})$ is
\[
\big[\hat{f}_{n,b}(\bb{s}) \pm \Phi^{-1}(1-\alpha/2) \, \sqrt{\psi(\bb{s}) \, \hat{f}_{n,b}(\bb{s})} \, n^{-1/2} b^{-d/4}\big],
\]
where $\Phi^{-1}$ denotes the quantile function of the standard normal distribution.
\end{theorem}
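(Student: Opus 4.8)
The plan is to establish the central limit theorem for the Dirichlet KDE under strong mixing by a classical small-block/large-block (Bernstein) decomposition, followed by a Lindeberg-type argument for the sum of the large blocks. Write $\hat{f}_{n,b}(\bb{s}) - \EE[\hat{f}_{n,b}(\bb{s})] = n^{-1}\sum_{i=1}^n Y_{n,i}$ with $Y_{n,i} = \kappa_{\bb{s},b}(\bb{X}_i) - \EE[\kappa_{\bb{s},b}(\bb{X}_i)]$, and note that $n^{1/2}b^{d/4}\{\hat{f}_{n,b}(\bb{s}) - f(\bb{s})\}$ splits as $n^{1/2}b^{d/4}\{\hat{f}_{n,b}(\bb{s}) - \EE[\hat{f}_{n,b}(\bb{s})]\} + n^{1/2}b^{d/4}\{\EE[\hat{f}_{n,b}(\bb{s})] - f(\bb{s})\}$. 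The second (bias) term is $\OO_{\bb{s}}(n^{1/2}b^{d/4}\cdot b) = \OO_{\bb{s}}(n^{1/2}b^{d/4+1})$, which goes to $0$ under Assumption~\ref{ass:5} (this uses only the bias expansion $\EE[\hat{f}_{n,b}(\bb{s})] - f(\bb{s}) = \OO_{\bb{s}}(b)$, which is weaker than what is already shown in the proof of Theorem~\ref{thm:MSE}; in fact Assumption~\ref{ass:4} alone suffices here for a $\oo_{\bb{s}}(b)$-type control, and the stated regime $n^{1/2}b^{d/4+1/2}\to 0$ leaves a comfortable margin). So everything reduces to showing $S_n := n^{-1/2}b^{d/4}\sum_{i=1}^n Y_{n,i} \xrightarrow{\mathrm{law}} \mathcal{N}(0,\psi(\bb{s})f(\bb{s}))$.

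The first key step is the variance computation: $\Var(n^{-1}\sum_i Y_{n,i}) = n^{-1}\Var(Y_{n,1}) + n^{-2}\sum_{i\neq j}\Cov(Y_{n,i},Y_{n,j})$. From the moment estimates in Lemma~\ref{lem:kappa.L.q.norm} (the $L^q$-norm bounds on $\kappa_{\bb{s},b}$) one gets $\Var(Y_{n,1}) = b^{-d/2}\psi(\bb{s})f(\bb{s})(1+\oo_{\bb{s}}(1))$, matching the leading variance term already appearing in Theorem~\ref{thm:MSE}; hence $n^{-1}\Var(Y_{n,1})\cdot nb^{d/2}/b^{d/2}$ rescales correctly so that $\Var(S_n)\to\psi(\bb{s})f(\bb{s})$, provided the covariance sum is negligible. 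The covariance sum is handled exactly as in the proof of Theorem~\ref{thm:MSE}: split the pairs $(i,j)$ into those with $|i-j|$ small versus large; for small lags use Assumption~\ref{ass:2} (the $L^p$ bound on $h_{i,j}$) together with Hölder and Lemma~\ref{lem:kappa.L.q.norm} to bound $|\Cov(Y_{n,i},Y_{n,j})|$ by a power of $b$; for large lags use Davydov's covariance inequality with the mixing rate of Assumption~\ref{ass:3}. The strengthened requirement $\nu\geq(d+4)/2$ is precisely what makes these covariance contributions $\oo_{\bb{s}}(b^{-d/2})$ after multiplication by $n^{-1}$, i.e. $n^{-1}b^{d/2}\sum_{i\neq j}|\Cov(Y_{n,i},Y_{n,j})| = \oo_{\bb{s}}(1)$.

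The second, and main, step is the CLT for the normalized triangular array $S_n$ of dependent summands. I would use the Bernstein blocking method: partition $\{1,\dots,n\}$ into $k_n$ alternating large blocks of length $p_n$ and small blocks of length $q_n$, with $p_n\to\infty$, $q_n/p_n\to 0$, $k_n p_n/n\to 1$, and $k_n\alpha(q_n)\to 0$ — feasible since $\alpha(n)\ll n^{-\nu}$ decays polynomially. Let $S_n'$ be the contribution of the large blocks and $S_n''$ that of the small blocks. One shows $\Var(S_n'')\to 0$ using the variance/covariance estimates above (the small blocks carry asymptotically vanishing total length relative to the rescaling), so $S_n'' \to 0$ in $L^2$ and it suffices to treat $S_n'$. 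Next, by a standard characteristic-function argument using the mixing coefficient (Volkonskii–Rozanov type inequality, e.g. Lemma~1.1 in \citet{MR1640691} or the bound $|\EE e^{it\sum B_j} - \prod \EE e^{itB_j}|\leq 16 k_n\alpha(q_n)$), the large blocks $B_1,\dots,B_{k_n}$ can be replaced by independent copies $\tilde B_1,\dots,\tilde B_{k_n}$ with the same marginals at negligible cost. Finally, apply the Lindeberg–Feller CLT to $\sum_j \tilde B_j$: the variance converges to $\psi(\bb{s})f(\bb{s})$ by the variance analysis, and the Lindeberg condition is verified via a truncation argument — here the crucial input is that $\|\kappa_{\bb{s},b}\|_\infty$ and $\|\kappa_{\bb{s},b}\|_{L^q}$ (from Lemma~\ref{lem:kappa.L.q.norm}) grow only polynomially in $b^{-1}$, so that a bound of the form $\EE[|\tilde B_j|^2\ind_{\{|\tilde B_j|>\e\sqrt{\Var(S_n')}\}}] \to 0$ follows from choosing the block length $p_n$ growing slowly enough relative to $b^{-d/2}$. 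The plug-in confidence interval is then immediate from Slutsky's theorem once one notes $\hat{f}_{n,b}(\bb{s})\xrightarrow{\PP} f(\bb{s})$ (a consequence of Theorem~\ref{thm:MSE}), so $\psi(\bb{s})\hat{f}_{n,b}(\bb{s})$ is a consistent estimator of the asymptotic standard deviation $\psi(\bb{s})f(\bb{s})$ — though I note the stated interval uses $\psi(\bb{s})\hat f_{n,b}(\bb{s})$ rather than $(\psi(\bb{s})\hat f_{n,b}(\bb{s}))^{1/2}$, which looks like a typo to be reconciled.

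The hard part will be the bookkeeping in the blocking argument: choosing $p_n, q_n$ simultaneously compatible with (i) the mixing rate so that $k_n\alpha(q_n)\to 0$, (ii) the rescaling by $b^{d/4}$ so that the Lindeberg condition holds despite $b=b(n)\to 0$, and (iii) the requirement that small blocks are asymptotically negligible. This is where the interplay between Assumption~\ref{ass:5} (the bandwidth regime, in particular $nb^{d/2}\to\infty$) and the reinforced mixing exponent $\nu\geq(d+4)/2$ is delicate and must be threaded carefully; the rest is a fairly mechanical adaptation of Bernstein's method combined with the kernel moment bounds already isolated in Section~\ref{sec:tech.lemmas}.
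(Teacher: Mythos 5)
Your proposal follows essentially the same route as the paper: split off the bias term, then apply Bernstein's big-block/small-block sectioning to the centered sum, kill the small blocks and covariance contributions in $L^2$ using the kernel bounds of Section~\ref{sec:tech.lemmas} together with Assumptions~\ref{ass:2}--\ref{ass:3} (the paper uses Billingsley's covariance inequality where you invoke Davydov's, a cosmetic difference), decouple the big blocks via the Volkonskii--Rozanov characteristic-function inequality, and finish with Lindeberg--Feller; your observation that the stated confidence interval should involve $\{\psi(\bb{s})\hat f_{n,b}(\bb{s})\}^{1/2}$ rather than $\psi(\bb{s})\hat f_{n,b}(\bb{s})$ is also correct. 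One point to fix: your treatment of the bias term claims $\EE\{\hat f_{n,b}(\bb{s})\}-f(\bb{s})=\OO_{\bb{s}}(b)$, asserting that Assumption~\ref{ass:4} alone even gives $\oo_{\bb{s}}(b)$-type control; this is not right, since the $\OO_{\bb{s}}(b)$ expansion requires the twice continuous differentiability of Assumption~\ref{ass:1}, which is \emph{not} assumed in this theorem, and Lipschitz continuity alone only yields $\OO_{\bb{s}}(b^{1/2})$ (the Dirichlet kernel's standard deviation scale), which is exactly why Assumption~\ref{ass:5} demands $n^{1/2}b^{d/4+1/2}\to 0$ rather than $n^{1/2}b^{d/4+1}\to 0$. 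With the correct $b^{1/2}$ rate the rescaled bias still vanishes under Assumption~\ref{ass:5}, so the conclusion stands, but there is no ``comfortable margin''---the bandwidth condition is calibrated precisely for that rate.
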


\section{Real-data illustration}\label{sec:application}

Monthly market-share compositions of five Renault vehicle classes ($A$, $B$, $C$, $D$, $E$) sold in France from January 2003 to August 2015 ($n = 152$) are considered. The dataset is publicly available in the GitHub repository of \citet{BarreiroLaurentThomasAgnan2021} under the name \texttt{BDDSegX.RData}. Writing $\bb{S}_t = (S_{A,t}, S_{B,t}, S_{C,t}, S_{D,t}, S_{E,t}), ~t \in \{1,\ldots,152\}$, the Dirichlet kernel estimator $\hat{f}_{n,b^{\star}}$ in \eqref{eq:Dirichlet.KDE} is applied, where the bandwidth parameter $b^{\star}$ is selected by minimizing the leave-one-out least-squares cross-validation (LSCV), viz.
\[
\mathrm{LSCV}(b)
= \int_{\mathcal{S}_d} \bigl\{\hat{f}_{n,b}(\bb{s})\bigr\}^2 \rd \bb{s}
- 2 \int_{\mathcal{S}_d} \hat{f}_{n,b}(\bb{s}) f(\bb{s}) \rd \bb{s}.
\]
In practice, both terms are approximated by Monte Carlo sampling. Draw $M = 1000$ independent points $\smash{\widetilde{\bb{S}}_1,\ldots,\widetilde{\bb{S}}_M} \sim \mathrm{Uniform}(\mathcal{S}_d) \equiv \mathrm{Dirichlet}(\bb{1},1)$ and approximate the first integral as follows:
\[
\begin{aligned}
\int_{\mathcal{S}_d} \bigl\{\hat{f}_{n,b}(\bb{s})\bigr\}^2 \rd \bb{s}
&\approx \frac{1}{M}\sum_{m=1}^M \bigl\{\hat{f}_{n,b}(\widetilde{\bb{S}}_m)\bigr\}^2 \times \mathrm{Vol}(\mathcal{S}_d) \\
&= \frac{1}{M d!}\sum_{m=1}^M \bigl\{\hat{f}_{n,b}(\widetilde{\bb{S}}_m)\bigr\}^2,
\end{aligned}
\]
since $\mathrm{Vol}(\mathcal{S}_d)=1/d!$. To approximate the second integral, draw random indices $I_1,\ldots,I_M\stackrel{\mathrm{iid}}{\sim}\mathrm{Uniform}\{1,\ldots,n\}$, and compute
\[
\int_{\mathcal{S}_d} \hat{f}_{n,b}(\bb{s}) f(\bb{s}) \rd \bb{s}
\approx \frac{1}{M} \sum_{m=1}^M \hat{f}_{n,b}^{(-I_m)}(\bb{X}_{I_m}),
\]
where $\hat{f}_{n,b}^{(-i)}$ denotes the Dirichlet KDE computed without the $i$th observation. The resulting Monte Carlo approximation of the LSCV criterion is
\[
\mathrm{LSCV}_{\mathrm{MC}}(b)
= \frac{1}{M d!}\sum_{m=1}^M \bigl\{\hat{f}_{n,b}(\widetilde{\bb{S}}_m)\bigr\}^2
- \frac{2}{M} \sum_{m=1}^M \hat{f}_{n,b}^{(-I_m)}(\bb{X}_{I_m}),
\]
which is evaluated over the grid $b_1,\ldots,b_K$ (say, $0.01,\ldots,0.50$) to yield the chosen bandwidth:
\[
b^{\star} = \argmin_{1\leq k\leq K} ~\mathrm{LSCV}_{\mathrm{MC}}(b_k).
\]

For exploratory analysis, each unordered pair $(S_i,S_j)$ is combined with the residual component $1 - S_i - S_j$, producing ten trivariate series that are smoothed with their own bandwidths $b^{\star}_{i,j}$, as illustrated in Figure~\ref{fig:density.plots}.

\begin{figure}[H]
\centering
\setlength{\tabcolsep}{2pt}
\renewcommand{\arraystretch}{0}
\begin{tabular}{ccc}
 % Row 1
 \begin{subfigure}{0.31\textwidth}
 \includegraphics[width=\linewidth]{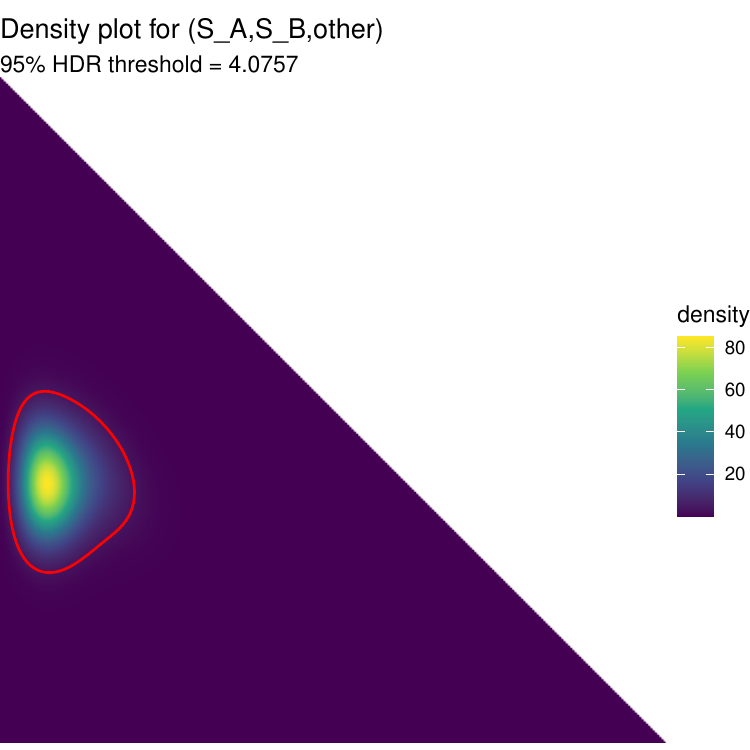}
 \end{subfigure} &
 \begin{subfigure}{0.31\textwidth}
 \includegraphics[width=\linewidth]{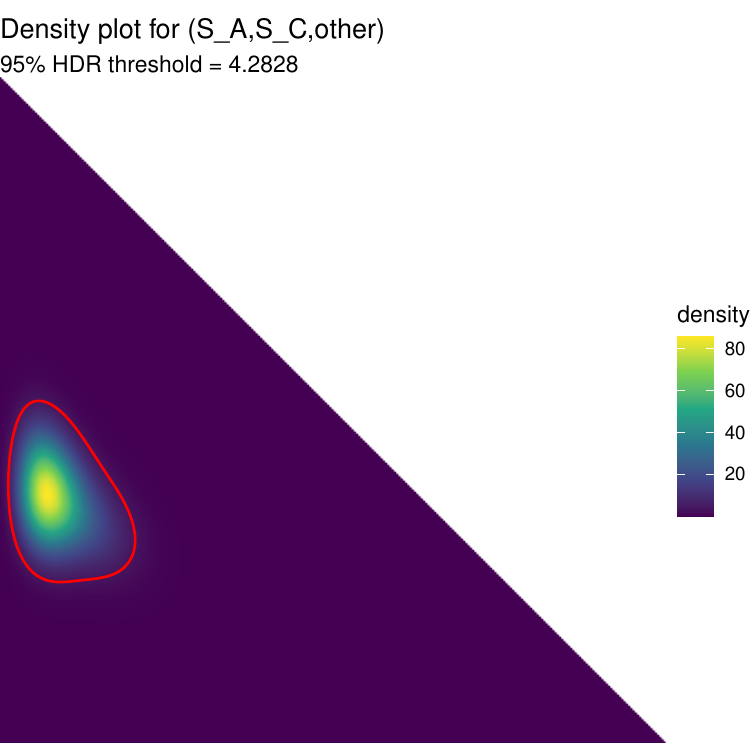}
 \end{subfigure} &
 \begin{subfigure}{0.31\textwidth}
 \includegraphics[width=\linewidth]{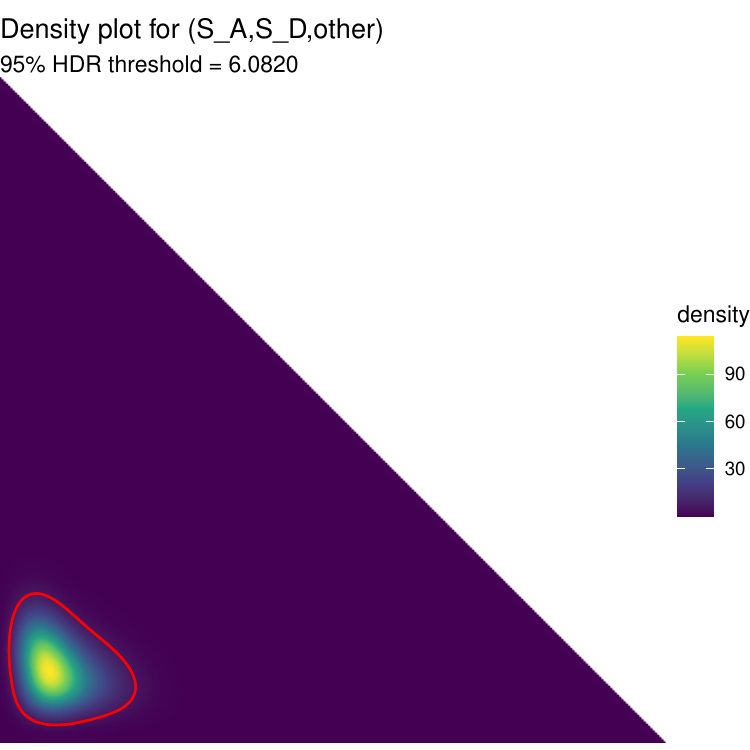}
 \end{subfigure} \\[0pt]
 % Row 2
 \begin{subfigure}{0.31\textwidth}
 \includegraphics[width=\linewidth]{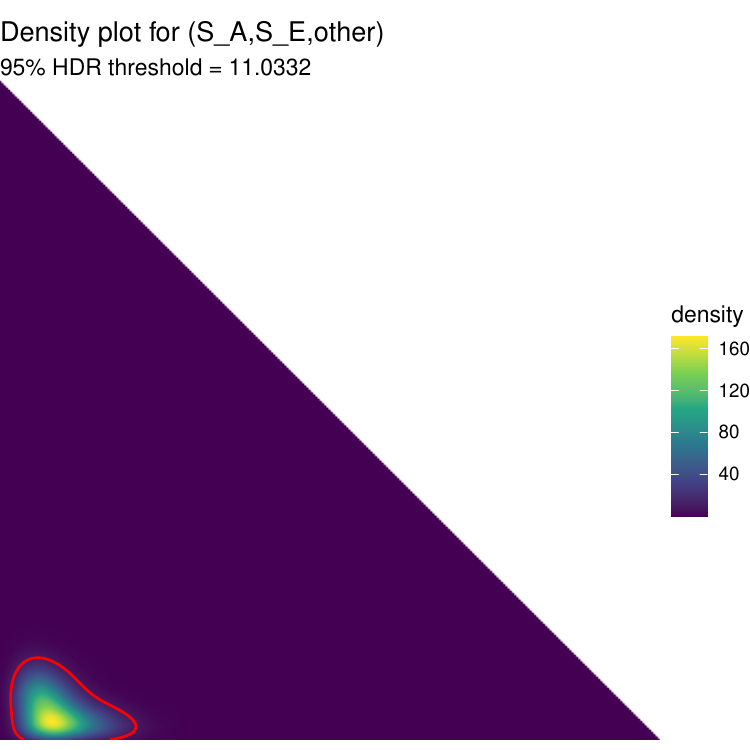}
 \end{subfigure} &
 \begin{subfigure}{0.31\textwidth}
 \includegraphics[width=\linewidth]{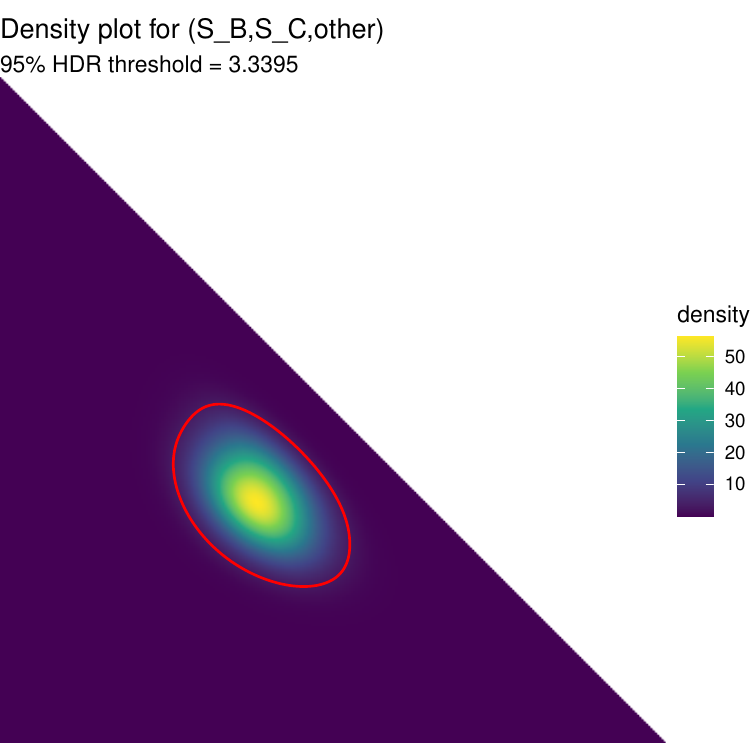}
 \end{subfigure} &
 \begin{subfigure}{0.31\textwidth}
 \includegraphics[width=\linewidth]{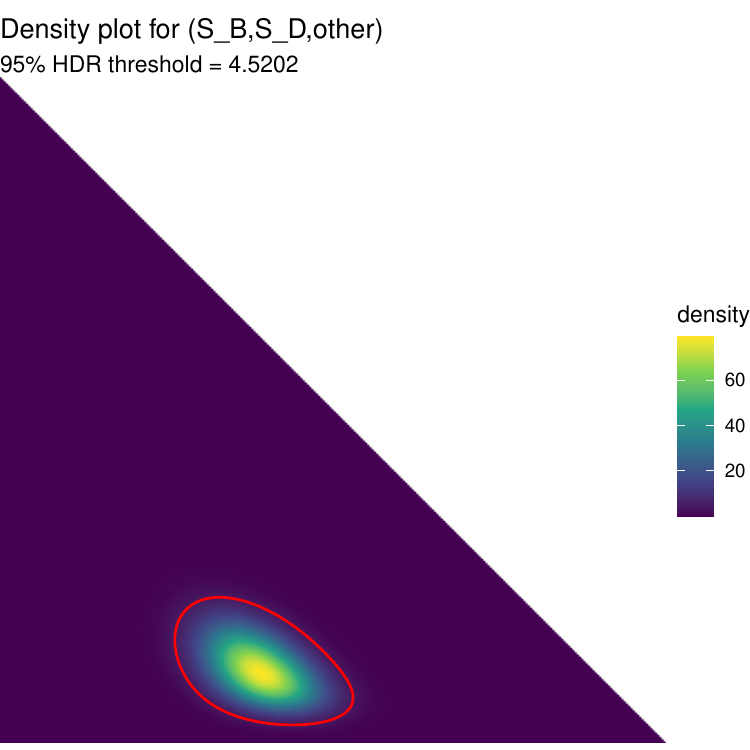}
 \end{subfigure} \\[0pt]
 % Row 3
 \begin{subfigure}{0.31\textwidth}
 \includegraphics[width=\linewidth]{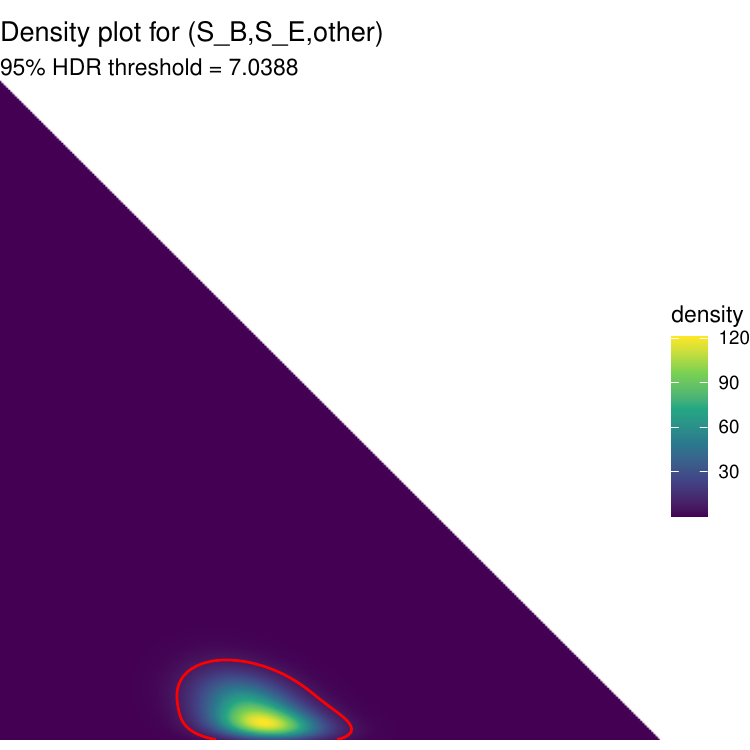}
 \end{subfigure} &
 \begin{subfigure}{0.31\textwidth}
 \includegraphics[width=\linewidth]{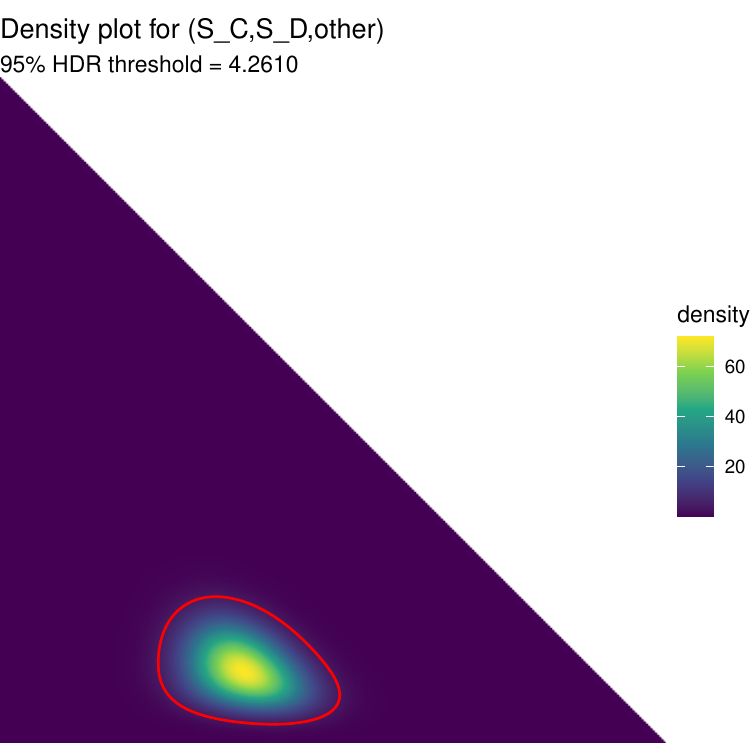}
 \end{subfigure} &
 \begin{subfigure}{0.31\textwidth}
 \includegraphics[width=\linewidth]{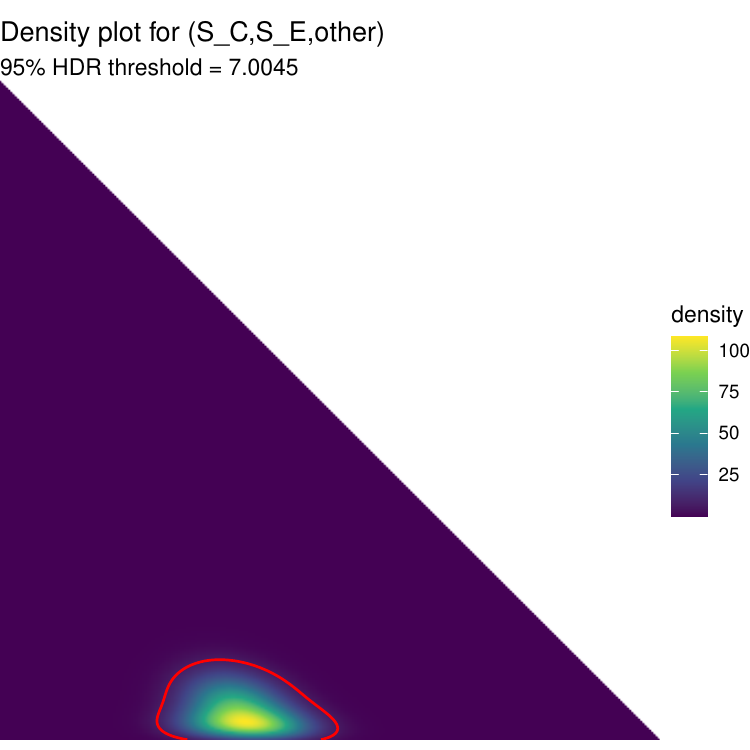}
 \end{subfigure} \\[-10pt]
 % Row 4: single center subfigure
 &
 \begin{subfigure}{0.31\textwidth}
 \includegraphics[width=\linewidth]{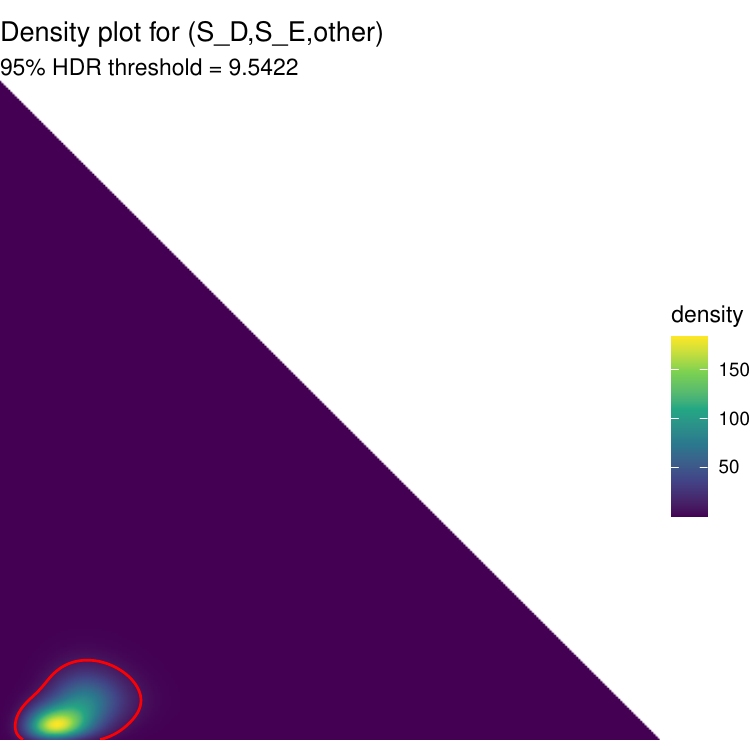}
 \end{subfigure} &
\end{tabular}
\caption{Kernel density estimates for each three-component composition $(S_i,S_j,\text{other})$ on the simplex, displayed with a common color scale (darker shades represent lower density). The red closed curve marks the boundary of the $95\%$ highest-density region (HDR).}
\label{fig:density.plots}
\end{figure}

The resulting density level at which the estimator encloses exactly $95\,\%$ of its mass --- i.e., the threshold that defines the highest-density region (HDR) for each pair --- is reported in Table~\ref{tab:hdr-thresholds}. These thresholds summarize, for every pair of Renault vehicle classes, how high the estimated density must be for a point on the simplex to belong to the long-term $95\,\%$ HDR. These thresholds can inform Renault’s strategic decision-making. By pinpointing the highest-density regions of joint market shares, the marketing team gains quantitative benchmarks for setting realistic cross-segment share targets, spotting unusual deviations early, and reallocating budgets across the classes $A$--$E$ to optimize their model portfolio.

%\vspace{-3mm}
%\input{application/HDR_thresholds_table.tex}

\begin{table}[ht]
\centering
\begin{tabular}{lc}
\toprule
Composition & HDR threshold \\
\midrule
$(S_A,S_B,\text{other})$ & 4.08 \\
$(S_A,S_C,\text{other})$ & 4.28 \\
$(S_A,S_D,\text{other})$ & 6.08 \\
$(S_A,S_E,\text{other})$ & 11.03 \\
$(S_B,S_C,\text{other})$ & 3.34 \\
$(S_B,S_D,\text{other})$ & 4.52 \\
$(S_B,S_E,\text{other})$ & 7.04 \\
$(S_C,S_D,\text{other})$ & 4.26 \\
$(S_C,S_E,\text{other})$ & 7.00 \\
$(S_D,S_E,\text{other})$ & 9.54 \\
\bottomrule
\\
\end{tabular}
\caption{Estimated 95\% highest-density-region thresholds for each composition $(S_i,S_j,\text{other})$ of Renault vehicle shares.}
\label{tab:hdr-thresholds}
\end{table}

\section{Discussion}\label{sec:discussion}

The analysis carried out in this paper establishes a precise large-sample description of the Dirichlet KDE on the simplex when the observations form a strongly mixing sequence. The MSE expansion in Theorem~\ref{thm:MSE} shows that the squared bias is of order $b^{2}$ while the variance is of order $n^{-1}b^{-d/2}$, as in the iid case. The asymptotic normality result in Theorem~\ref{thm:CLT} demonstrates that the centered and rescaled estimator converges in distribution to a Gaussian limit whose variance coincides with the variance term in the MSE. The real-data illustration in Section~\ref{sec:application} confirms that the method remains operative in practical situations involving temporal dependence. It was used in particular to compute long-term 95\% highest-density regions for every three-component composition of Renault vehicle classes, using the monthly sales data, thereby showing where the corresponding market-share compositions were most concentrated in France from 2003 to 2015.

The Dirichlet kernel estimator analyzed in this paper is defined with respect to the Euclidean geometry inherited from the ambient space $\R^d$, restricted to the simplex. A natural alternative is the Aitchison geometry, which is specifically tailored for compositional data. In Aitchison geometry, distances are computed after applying the isometric log-ratio (ilr) transformation, which maps the simplex to a Euclidean space where standard operations become valid; see, e.g., \citet{MR3328965}. A related transformation is the centered log-ratio (clr), which linearizes the simplex into a constrained hyperplane in $\R^{d+1}$, and under which the Aitchison distance corresponds to the Euclidean distance in that subspace. One possible research direction is to transport the Dirichlet kernel framework through a log-ratio transformation, then derive the large-sample properties of the back-transformed estimator on the simplex. The strong mixing coefficients used here would then pertain to the transformed process, but the regularity conditions on the density would need to be re-expressed in terms of the Aitchison norm or corresponding coordinates. The benefit would be a density estimator that is equivariant under perturbation and powering, two key operations in compositional data analysis. However, the tools required for such an extension significantly complicate the theoretical development, so this lies beyond the scope of the present paper.

Another promising avenue is the incorporation of spatial dependence. In many environmental or geological applications, compositions are observed across a spatial grid and exhibit correlation that decays with distance instead of time. Specifically, strong mixing for random fields stipulates that the maximal dependence between the $\sigma$-fields of any finite sets $S,T$ vanishes as $\operatorname{dist}(S,T)\to\infty$. Extending the proofs would therefore require adapting the blocking argument in Section~\ref{sec:proofs} to multi-index collections and verifying that the $L^{p}$-based dependence conditions remain compatible with the spatial mixing rate. The bias analysis would be unaffected, but the variance and covariance terms would involve lattice sums that depend on the dimension of the spatial index set. Deriving optimal bandwidth rates in that setting could reveal a different balance between bias and variance than in the one-dimensional temporal case.

Finally, the question of strong uniform convergence remains open. The asymptotic normality result guarantees pointwise stochastic fluctuations, yet many practical tasks, such as mode estimation \citep[e.g.,][]{MR2780217,MR2817352,MR2887564,MR3474998} or bump hunting (i.e., locating and assessing the statistical significance of local maxima of an unknown density function), require uniform strong consistency of the estimator and its first-order partial derivatives. Under additional smoothness of the target density and a bandwidth sequence that satisfies $n b^{d/2} / \log n \to \infty$, almost-sure uniform convergence might hold on $\mathcal{S}_d$ or an increasing sequence of compacts that fills up $\mathcal{S}_d$; cf.\  Theorem~4 of \citet{MR4319409} in the iid setting. Assume, for instance, that $\smash{\sup_{\bb{t}\in\mathcal{S}_{d}}\lvert \hat{f}_{n,b}(\bb{t})-f(\bb{t})\rvert\xrightarrow{\text{a.s.}} 0}$. Define the mode estimator $\hat{\bb{s}}_{n} = \argmax_{\bb{t}\in\mathcal{S}_{d}}\hat{f}_{n,b}(\bb{t})$ and let $\bb{s}_{0}$ be the unique maximizer of $f$. It follows that
\begin{equation}\label{eq:unif.bound}
\lvert f(\hat{\bb{s}}_{n})-f(\bb{s}_{0})\rvert
\leq \lvert f(\hat{\bb{s}}_{n})-\hat{f}_{n,b}(\hat{\bb{s}}_{n})\rvert ~+~ \rvert\hat{f}_{n,b}(\hat{\bb{s}}_{n})-f(\bb{s}_{0})\rvert
\leq 2 \sup_{\bb{t}\in\mathcal{S}_{d}}\lvert \hat{f}_{n,b}(\bb{t})-f(\bb{t})\rvert.
\end{equation}
A second-order Taylor expansion of $f$ around $\bb{s}_{0}$ yields
\begin{equation}\label{eq:taylor.mode}
f(\hat{\bb{s}}_{n})-f(\bb{s}_{0}) = \frac{1}{2} (\hat{\bb{s}}_{n}-\bb{s}_{0})^{\top}\nabla^{2}f(\bb{s}_{n}^{\star})(\hat{\bb{s}}_{n}-\bb{s}_{0}),
\end{equation}
with $\bb{s}_{n}^{\star}$ lying on the segment between $\bb{s}_{0}$ and $\hat{\bb{s}}_{n}$. Combining \eqref{eq:unif.bound} and \eqref{eq:taylor.mode}, and using the smallest eigenvalue $\lambda_{\min}(-\nabla^{2}f(\bb{s}_{n}^{\star}))>0$, gives
\[
\begin{aligned}
\lVert \hat{\bb{s}}_{n}-\bb{s}_{0}\rVert_2^2
&= \frac{\lVert \hat{\bb{s}}_{n}-\bb{s}_{0}\rVert_2^2}{\lvert \frac{1}{2} (\hat{\bb{s}}_{n}-\bb{s}_{0})^{\top}\nabla^{2}f(\bb{s}_{n}^{\star})(\hat{\bb{s}}_{n}-\bb{s}_{0})\rvert} \lvert f(\hat{\bb{s}}_{n})-f(\bb{s}_{0})\rvert \\
&\leq 4 \, \frac{\sup_{\bb{t}\in\mathcal{S}_{d}}\rvert\hat{f}_{n,b}(\bb{t})-f(\bb{t})\rvert}{\lambda_{\min}(-\nabla^{2}f(\bb{s}_{n}^{\star}))} \xrightarrow{\text{a.s.}} 0.
\end{aligned}
\]
Hence, the uniform strong consistency of the estimator would translate into $\hat{\bb{s}}_{n}\xrightarrow{\mathrm{a.s.}}\bb{s}_{0}$, and could also open the door more generally to rigorous inference for features such as modes, ridges, and highest-density regions on the simplex.

\section{Proofs}\label{sec:proofs}

\subsection{Proof of Theorem~\ref{thm:MSE}}

Following \citet[p.~44]{MR1640691}, we have the decomposition
\[
\EE\big\{\lvert \hat{f}_{n,b}(\bb{s}) - f(\bb{s})\rvert^2\big\}
= \big[\EE\{\hat{f}_{n,b}(\bb{s})\}- f(\bb{s})\big]^2 + \frac{1}{n} \Var\{\kappa_{\bb{s},b}(\bb{X}_1)\} + \frac{1}{n^2} \left(\mathcal{C}_{1,n} + \mathcal{C}_{2,n}\right),
\]
where
\[
\mathcal{C}_{1,n} = \sum_{1\leq \rvert i-j\rvert\leq \beta_n} \Cov\{\kappa_{\bb{s},b}(\bb{X}_i),\kappa_{\bb{s},b}(\bb{X}_j)\}, \quad
\mathcal{C}_{2,n} = \sum_{\beta_n + 1\leq \lvert i-j\rvert\leq n - 1} \Cov\{\kappa_{\bb{s},b}(\bb{X}_i),\kappa_{\bb{s},b}(\bb{X}_j)\},
\]
for some separator $\beta_n \leq n-1$ to be chosen later. The asymptotics of the squared bias and variance were derived under Assumption~\ref{ass:1} in \citet[Theorem~1]{MR4319409}, so it is sufficient to prove that $\mathcal{C}_{1,n}$ and $\mathcal{C}_{2,n}$ are $\oo_{\bb{s}}(n b^{-d/2})$.

Let $p\in (2,\infty)$ and $q\in (1,2)$ be given such that $1/p + 1/q = 1$. On the one hand, applying H\"older's inequality, followed by Assumption~\ref{ass:2} and the estimate on the $L^q$ norm of the Dirichlet kernel in Lemma~\ref{lem:kappa.L.q.norm} of Section~\ref{sec:tech.lemmas}, we obtain, for all $i\neq j$,
\[
\begin{aligned}
\sup_{i\neq j} \rvert\Cov\{\kappa_{\bb{s},b}(\bb{X}_i),\kappa_{\bb{s},b}(\bb{X}_j)\}\rvert
&= \sup_{i\neq j} \left\rvert \int_{\mathcal{S}_d} \kappa_{\bb{s},b}(\bb{x}) \kappa_{\bb{s},b}(\bb{x}') h_{i,j}(\bb{x},\bb{x}') \rd \bb{x} \rd \bb{x}'\right\rvert \\
&\leq \rVert \kappa_{\bb{s},b}\rVert_q^2 \, \sup_{i\neq j} \rVert h_{i,j}\rVert_p \\
&\ll \frac{b^{-d/p} \psi^{2/p}(\bb{s})}{2^{-d/p}q^{d/q}}.
\end{aligned}
\]
This last bound shows that
\[
\begin{aligned}
\lvert \mathcal{C}_{1,n}\rvert
&\ll n \beta_n \times b^{-d/p} \psi^{2/p}(\bb{s}) \\
&\ll_{\bb{s}} n b^{-d/p} \beta_n.
\end{aligned}
\]
On the other hand, using Billingsley's inequality \citep[Corollary~1.1]{MR1640691} in conjunction with the upper bound on the supremum norm of the Dirichlet kernel in Lemma~\ref{lem:local.bound}, we have, for all $i\neq j$,
\[
\begin{aligned}
\rvert\Cov\{\kappa_{\bb{s},b}(\bb{X}_i),\kappa_{\bb{s},b}(\bb{X}_j)\}\rvert
&\leq 4 \, \lVert \kappa_{\bb{s},b}\rVert_\infty^2 \alpha(\lvert i-j\rvert) \\
&\ll_{\bb{s}} b^{-d} \alpha(\rvert i-j\rvert).
\end{aligned}
\]
Together with the condition $\alpha(n) \ll n^{-\nu}$ in Assumption~\ref{ass:3}, this last bound shows that
\[
\begin{aligned}
\lvert \mathcal{C}_{2,n}\rvert
&\ll_{\bb{s}} b^{-d} \sum_{\beta_n + 1\leq \lvert i-j\rvert\leq n - 1} \alpha(|i - j|) \\
&\ll n b^{-d} \int_{\beta_n}^{\infty} x^{-\nu} \rd x \\
&\ll n b^{-d} \beta_n^{-\nu + 1}.
\end{aligned}
\]
Since $\nu>2(p - 1)/(p - 2)$ in Assumption~\ref{ass:3}, choosing $\beta_n = \min\{b^{-d/(\nu q)},n-1\}$ yields
\[
\mathcal{C}_{1,n} + \mathcal{C}_{2,n} = \OO_{\bb{s}}\big[n b^{-d \{1/p + 1/(\nu q)\}}\big] = \oo_{\bb{s}}(n b^{-d/2}).
\]
This concludes the proof.

\subsection{Proof of Theorem~\ref{thm:CLT}}

Let $\bb{s}\in \mathrm{Int}(\mathcal{S}_d)$ be given, and consider the decomposition
\begin{equation}\label{eq:decomp}
n^{1/2} b^{d/4} \{\hat{f}_{n,b}(\bb{s}) - f(\bb{s})\}
= n^{1/2} b^{d/4} \big[\hat{f}_{n,b}(\bb{s}) - \EE\{\hat{f}_{n,b}(\bb{s})\}\big] + n^{1/2} b^{d/4} \big[\EE\{\hat{f}_{n,b}(\bb{s})\} - f(\bb{s})\big].
\end{equation}
The second term on the right-hand side of \eqref{eq:decomp} is $\OO(n^{1/2} b^{d/4 + 1/2})$ under Assumption~\ref{ass:4} by Equation~(12) of \citet{MR4319409}, which, in turn, is $\oo(1)$ by Assumption~\ref{ass:5}.

It remains to show that the first term on the right-hand side of \eqref{eq:decomp} is asymptotically normal. One has
\begin{equation}\label{sum1}
n^{1/2} b^{d/4} \big[\hat{f}_{n,b}(\bb{s}) - \EE\{\hat{f}_{n,b}(\bb{s})\}\big]
= n^{-1/2} b^{d/4} \sum_{i=1}^n \Xi_{i,n}(\bb{s})
\end{equation}
where $\Xi_{i,n}(\bb{s})= \kappa_{\bb{s},b}(\bb{X}_i)-\EE\{\kappa_{\bb{s},b}(\bb{X}_i)\}$.

To handle strongly mixing random variables (under Assumption~\ref{ass:2}), we use the well-known sectioning device introduced by \citet[pp.~228--232]{MR58896}. One first selects positive integer sequences $(p_n)_n$ and $(q_n)_n$ diverging to infinity as $n\to \infty$, with $k_n = \left\lfloor n/(p_n + q_n)\right\rfloor\to \infty$, and any $\nu > \max\{2(p - 1)/(p - 2),(d+4)/2\}$ for which Assumption~\ref{ass:3} holds, such that, as $n\to \infty$,
\begin{center}
\begin{enumerate*}[label=(\alph*)]\setlength\itemsep{0em}
\item $\displaystyle \frac{k_n q_n}{n}\to 0$;\quad \label{ass:6.a}
\item $\displaystyle \frac{p_n}{p_n + q_n}\to 1$;\quad \label{ass:6.b}
\item $\displaystyle \frac{k_n q_n^2 n^{-1} b^{-d/2}}{(p_n + q_n)^{\nu}}\to 0$;\quad \label{ass:6.c}
\item $\displaystyle \frac{k_n}{q_n^{\nu}}\to 0$;\quad \label{ass:6.d}
\item $\displaystyle \frac{p_n}{n^{1/2} b^{d/4}}\to 0$. \label{ass:6.e}
\end{enumerate*}
\end{center}

\noindent
For example, one convenient specification is to take the bandwidth as $b = n^{-2/(d+4)}$, choose the big-block length $p_n = n^{-\e + 2/(d+4)}$, and the small-block length $q_n = n^{1/\nu}$; this selection works for any fixed $\e\in (0,2/(d+4) - 1/\nu)$.

One then splits the set of indices $\{1, \ldots,n\}$ of the sum \eqref{sum1} into $(2k_n + 1)$ subsets with $k_n$ big blocks of size $p_n$ and $k_n$ small blocks of size $q_n$. More specifically, for $j \in \{ 1, \ldots, k_n\}$, let
\[
\begin{aligned}
&\mathcal{I}_j = \{(j - 1)(p_n + q_n) + 1,\ldots,(j - 1)(p_n + q_n) + p_n \}, \\
&\mathcal{J}_j = \{(j - 1)(p_n + q_n) + p_n + 1,\ldots,j(p_n + q_n)\},
\end{aligned}
\]
be the $j$th big and small block, respectively, and let the remaining indices form the set $\{k_n (p_n + q_n) + 1, \ldots, n\}$, which may be empty. Next, define the following random variables for $j\in \{ 1, \ldots, k_n\}$:
\[
U_j(\bb{s}) = \sum_{i\in\mathcal{I}_j}\Xi_{i,n}(\bb{s}), \qquad
V_j(\bb{s}) = \sum_{i\in\mathcal{J}_j}\Xi_{i,n}(\bb{s}).
\]
It follows that
\[
\begin{aligned}
\sum_{i=1}^n \Xi_{i,n}(\bb{s})
&= \sum_{j=1}^{k_n} U_j(\bb{s}) + \sum_{j=1}^{k_n}V_j(\bb{s}) + \sum_{i=k_n (p_n + q_n) + 1}^n \Xi_{i,n}(\bb{s}) \\
&\equiv S_{1,n}(\bb{s}) + S_{2,n}(\bb{s}) + S_{3,n}(\bb{s}).
\end{aligned}
\]

\newpage
\noindent
To conclude, it suffices to show that, as $n\to \infty$,
\begin{enumerate}[label=(\roman*)]\setlength\itemsep{0em}
\item $\displaystyle n^{-1/2} b^{d/4} S_{2,n}(\bb{s}) \stackrel{L^2}{\longrightarrow} 0$;\quad \label{proof:i}
\item $\displaystyle n^{-1/2} b^{d/4} S_{3,n}(\bb{s}) \stackrel{L^2}{\longrightarrow} 0$;\quad \label{proof:ii}
\item $\displaystyle n^{-1/2} b^{d/4} S_{1,n}(\bb{s}) \stackrel{\mathrm{law}}{\longrightarrow} \mathcal{N}(0, \psi(\bb{s}) f(\bb{s}))$. \label{proof:iii}
\end{enumerate}

\noindent
{\it Proof of \ref{proof:i}:}
First, observe that
\[
\begin{aligned}
\EE\{S_{2,n}(\bb{s})^2\}
&= \sum_{j=1}^{k_n} \EE\{V_j(\bb{s})^2\} + 2 \sum_{1 \leq i < j \leq k_n} \EE\{V_i(\bb{s}) V_j(\bb{s})\} \\
&= k_n q_n \EE\{\Xi_{1,n}^2(\bb{s})\} + 2 k_n \sum_{1\leq i < j \leq q_n} \EE\{\Xi_{i,n}(\bb{s})\Xi_{j,n}(\bb{s})\}
+ 2 \sum_{1\leq i < j \leq k_n} \EE\{V_i(\bb{s}) V_j(\bb{s})\} \\[-1mm]
&\equiv \mathcal{T}_{1,n} + \mathcal{T}_{2,n} + \mathcal{T}_{3,n}.
\end{aligned}
\]
Using Lemma~\ref{lem:kappa.L.q.norm} with $q=2$, note that
\[
\mathcal{T}_{1,n} \ll_{\bb{s}} k_n q_n b^{-d/2} \stackrel{\ref{ass:6.a}}{=} \oo_{\bb{s}}(n b^{-d/2}).
\]
Under Assumptions~\ref{ass:2}--\ref{ass:3}, the term $\mathcal{T}_{2,n}$ is asymptotically negligible compared to $\mathcal{T}_{1,n}$ for the same reason that the covariance terms in the proof of Theorem~\ref{thm:CLT} were negligible in front of the variance term.
Next, using \citet[p.~6, Formula~1.12a]{MR3642873} followed by the local bound on the Dirichlet kernel in Lemma~\ref{lem:local.bound}, we get
\[
\begin{aligned}
\rvert\EE\{V_i(\bb{s}) V_j(\bb{s})\}\rvert
&\leq 2 \rVert V_i(\bb{s})\rVert_{\infty} \rVert V_j(\bb{s})\rVert_{\infty} \alpha(p_n + (j - i - 1)(p_n + q_n)) \\
&\ll_{\bb{s}} (q_n b^{-d/2})^2 \alpha(p_n + (j - i - 1)(p_n + q_n)).
\end{aligned}
\]
In turn, under Assumption~\ref{ass:3}, the above implies
\[
\begin{aligned}
\mathcal{T}_{3,n}
&\ll_{\bb{s}} (q_n b^{-d/2})^2 \sum_{i=1}^{k_n - 1} \sum_{\ell=0}^{k_n - i - 1} \{p_n + \ell (p_n + q_n)\}^{-\nu} \\
&\stackrel{\ref{ass:6.b}}{\ll} \frac{(q_n b^{-d/2})^2}{(p_n + q_n)^{\nu}} \sum_{i=1}^{k_n - 1}\sum_{\ell=0}^{\infty} (1/2 + \ell)^{-\nu} \\
&\ll \frac{k_n (q_n b^{-d/2})^2}{(p_n + q_n)^{\nu}},
\end{aligned}
\]
so that
\[
\mathcal{T}_{3,n} \stackrel{\ref{ass:6.c}}{=} \oo_{\bb{s}}(n b^{-d/2}).
\]
Putting the above estimates together proves that \ref{proof:i} holds.

\bigskip
\noindent
{\it Proof of \ref{proof:ii}:}
Given that $S_{3,n}(\bb{s})$ is a small $q_n$-block truncated by the sample size $n$, the proof is analogous to that of \ref{proof:i} but easier since the block has size at most $q_n$ instead of size equal to $q_n$. The details are omitted for conciseness.

\bigskip
\noindent
{\it Proof of \ref{proof:iii}:}
Using Lemma~1.1 of \citet{MR121856}, followed by Assumption~\ref{ass:3} and \ref{ass:6.d}, the characteristic function of $S_{1,n}(\bb{s})$ satisfies
\[
\begin{aligned}
\bigg\lvert \EE\bigg[\exp\bigg\{i t n^{-1/2} b^{d/4} S_{1,n}(\bb{s})\bigg\}\bigg] - \prod_{j=1}^{k_n} \EE\bigg[\exp\{i t n^{-1/2} b^{d/4} U_j(\bb{s})\}\bigg]\bigg\rvert
&\leq 16(k_n - 1) \alpha(q_n) \\
&\ll k_n q_n^{-\nu}\to 0.
\end{aligned}
\]
Hence, the $U_j$'s are asymptotically independent inside $S_{1,n}(\bb{s})$. Moreover, using the local bound in Lemma~\ref{lem:local.bound}, and \ref{ass:6.e}, we have
\[
n^{-1/2} b^{d/4} \lvert U_j(\bb{s})\rvert \ll_{\bb{s}} n^{-1/2} b^{-d/4} p_n\to 0.
\]
It follows that $S_{1,n}(\bb{s})$ satisfies Lindeberg's condition,
\[
\frac{1}{(n^{-1/2} b^{d/4})^2} \sum_{j=1}^{k_n} \EE\left[U_j(\bb{s})^2 \ind_{\{U_j(\bb{s}) > \e n^{-1/2} b^{d/4} \sqrt{\psi(\bb{s}) f(\bb{s})}\}}\right]\to 0, \quad \e > 0,
\]
given that for $n$ large enough, the indicator becomes identically $0$. This proves \ref{proof:iii} and completes the proof of the theorem.

\section{Technical lemmas}\label{sec:tech.lemmas}

The first lemma studies the asymptotics of the $L^q$ norm of the Dirichlet kernel for all $q\in (1,\infty)$.

\begin{lemma}\label{lem:kappa.L.q.norm}
Let $q\in (1,\infty)$ and $\bb{s}\in \mathrm{Int}(\mathcal{S}_d)$ be given. One has, as $b\to 0$,
\[
\rVert \kappa_{\bb{s},b}\rVert_q^2 = \frac{b^{-d/p} \psi^{2/p}(\bb{s})}{2^{-d/p} q^{d/q}} \{1 + \OO_{\bb{s}}(b)\}.
\]
\end{lemma}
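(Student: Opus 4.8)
The plan is to compute the $L^q$ norm of $\kappa_{\bb{s},b}$ directly from the closed-form Dirichlet density, isolate the normalizing constant, and apply a local Laplace-type expansion of the resulting integral. Writing $\kappa_{\bb{s},b}(\bb{x}) = K_{\bb{s}/b+\bb{1},(1-\lVert\bb{s}\rVert_1)/b+1}(\bb{x})$, one has
\[
\lVert \kappa_{\bb{s},b}\rVert_q^q
= \left(\frac{\Gamma(1/b + d + 1)}{\Gamma((1-\lVert\bb{s}\rVert_1)/b+1)\prod_{i\in[d]}\Gamma(s_i/b+1)}\right)^{\!q}
\int_{\mathcal{S}_d} (1-\lVert\bb{x}\rVert_1)^{q\{(1-\lVert\bb{s}\rVert_1)/b\}}\prod_{i\in[d]}x_i^{q s_i/b}\,\rd\bb{x}.
\]
The integral is itself (up to constants) a Dirichlet normalizing factor: it equals $\Gamma\bigl(q/b + d + 1 - q\delta\bigr)^{-1}\Gamma\bigl(q(1-\lVert\bb{s}\rVert_1)/b + 1 - q\delta_0\bigr)\prod_i\Gamma\bigl(q s_i/b + 1 - q\delta_i\bigr)$ with suitable $O(1)$ shift corrections $\delta,\delta_i$ coming from the $+1$'s; the cleanest route is to substitute $\bb{x} = b\cdot(\text{centred fluctuation})$ around the mode $\bb{s}$ and recognize a multivariate Gaussian integral, which is exactly the technique used for the iid mean/variance asymptotics in \citet[Theorem~1]{MR4319409}. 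Either way, the whole expression reduces to a ratio of Gamma functions evaluated at arguments of order $b^{-1}$.

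The second step is to feed these Gamma ratios into Stirling's formula. The dominant $b^{-1}$-scaled arguments produce, after cancellation, a factor $q^{-d/q}$ from the mismatch between $\Gamma(q/b+\cdots)$ in the numerator and the $d+1$ Gamma functions $\Gamma(q s_i/b+\cdots)$, $\Gamma(q(1-\lVert\bb{s}\rVert_1)/b+\cdots)$ in the denominator, together with the $b^{-d/2}$ and the $\bigl[(1-\lVert\bb{s}\rVert_1)\prod_i s_i\bigr]^{-1/2}$ pieces that combine into $(4\pi)^{d/2}\psi(\bb{s})$ raised to an appropriate power; taking the square and using $1/p = 1 - 1/q$ rearranges this into the stated form $b^{-d/p}\psi^{2/p}(\bb{s})/(2^{-d/p}q^{d/q})$. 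The $\{1+\OO_{\bb{s}}(b)\}$ relative error is the next-order Stirling term, and its dependence on $\bb{s}$ enters through the constants multiplying $1/b$; for the second (uniform) inequality one checks that this error term is one-signed after the expansion, or more simply bounds the Gamma ratio from above using log-convexity of $\Gamma$ / the non-asymptotic Stirling bounds $\sqrt{2\pi}\,x^{x-1/2}e^{-x}\le\Gamma(x)\le e\,x^{x-1/2}e^{-x}$, which yields the same leading constant with a one-sided $\{1+\OO(b)\}$ correction that no longer depends on $\bb{s}$ (only on $d$).

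The main obstacle is the bookkeeping in the Stirling expansion: the arguments of the Gamma functions are $q/b+d+1$, $qs_i/b + 1 - q\delta_i$, etc., so the $O(1)$ shifts interact with the $\log$ of the $O(b^{-1})$ part and one must verify that all the $b^0$-order contributions cancel to leave precisely $q^{-d/q}$ and no spurious $\bb{s}$-dependent multiplicative constant at leading order — this is where an error in a single additive constant would corrupt the claimed formula. I expect the substitution-to-Gaussian route to be the safest way to control this, since it makes the $(4\pi)^{-d/2}$ and the square-root Jacobian factors transparent and reduces the remainder analysis to a routine dominated-convergence argument on the rescaled integrand, exactly paralleling the variance computation already invoked from \citet{MR4319409}. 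The uniform bound then requires only replacing the asymptotic Stirling expansion by its two-sided inequality version and tracking that the upper constant is dimension-dependent only.
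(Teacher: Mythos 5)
Your main computation follows essentially the same route as the paper: raise $\kappa_{\bb{s},b}$ to the $q$th power, recognize the integrand as proportional to another Dirichlet density, so that $\int_{\mathcal{S}_d}\kappa_{\bb{s},b}^q(\bb{x})\,\rd\bb{x}$ is an exact ratio of Gamma functions, and then apply Stirling. One correction: no ``$O(1)$ shift corrections'' $\delta,\delta_i$ are needed --- the integral equals exactly $\Gamma\{q(1-\rVert\bb{s}\rVert_1)/b+1\}\prod_{i\in[d]}\Gamma(qs_i/b+1)\big/\Gamma(q/b+d+1)$, because the exponents of $\kappa_{\bb{s},b}^q$ are $qs_i/b$ and $q(1-\rVert\bb{s}\rVert_1)/b$, which correspond to Dirichlet parameters $qs_i/b+1$ and $q(1-\rVert\bb{s}\rVert_1)/b+1$; keeping this identity exact removes precisely the bookkeeping risk you flag. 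With that, your Stirling accounting delivers the first (pointwise, $\OO_{\bb{s}}(b)$) statement, which is what the paper does by factoring each Gamma through the ratio $R(z)=\sqrt{2\pi}\,e^{-z}z^{z+1/2}/\Gamma(z+1)$.

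The genuine gap is in the second, uniform-in-$\bb{s}$ bound. The two-sided non-asymptotic Stirling bounds $\sqrt{2\pi}\,x^{x-1/2}e^{-x}\le\Gamma(x)\le e\,x^{x-1/2}e^{-x}$ cannot give the claimed $\{1+\OO(b)\}$ factor: applied to the $2(d+1)$ Gamma factors (some raised to the power $q$), they inflate the bound by a fixed power of $e/\sqrt{2\pi}>1$ that does not shrink as $b\to 0$, so you would only obtain the correct rate and $\psi^{2/p}(\bb{s})$ dependence up to a multiplicative constant, not with the exact leading constant as the lemma asserts. Your alternative remark that ``the error is one-signed'' is the right instinct but needs a proof; the paper's mechanism is that $0<R<1$ and $R$ is increasing, hence $R^q(x)\le R(x)\le R(qx)$, so every $\bb{s}$-dependent correction factor of the form $R^q(s_i/b)/R(qs_i/b)$ (and the analogous factor for $1-\rVert\bb{s}\rVert_1$) is bounded above by $1$; what survives is the $\bb{s}$-free factor $R(q/b+d)/R^q(1/b+d)$ together with elementary $\bb{s}$-free terms, all equal to $1+\OO(b)$ uniformly. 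Note also that your fallback Laplace/Gaussian substitution yields the pointwise expansion but its remainder degrades as $\bb{s}$ approaches the boundary, so it does not obviously produce the one-sided uniform bound either.
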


\begin{proof}
Define
\[
\begin{aligned}
A_{b,q}(\bb{s})
&= \frac{\kappa_{\bb{s},b}^q(\cdot)}{K_{q\bb{s}/b + \bb{1}, q(1 - \rVert \bb{s}\rVert_1)/b + 1}(\cdot)} \\
&= \frac{\Gamma\{q(1 - \rVert \bb{s}\rVert_1)/b + 1\} \prod_{i\in [d]} \Gamma(q s_i/b + 1)}{\Gamma^q\{(1 - \rVert \bb{s}\rVert_1)/b + 1\} \prod_{i\in [d]} \Gamma^q(s_i/b + 1)} \times \frac{\Gamma^q(1/b + d + 1)}{\Gamma(q/b + d + 1)}.
\end{aligned}
\]
If we denote $R(z) = \sqrt{2\pi} e^{-z} z^{z + 1/2}/\Gamma(z + 1)$ for arbitrary $z\geq 0$, and
\[
S_{b,q}(\bb{s}) = \frac{R^q\{(1 - \lVert \bb{s}\rVert_1)/b\} \prod_{i\in [d]} R^q(s_i/b)}{R\{q(1 - \rVert \bb{s}\rVert_1)/b\} \prod_{i\in [d]} R(q s_i/b)} \times \frac{R(q/b + d)}{R^q(1/b + d)},
\]
then we have
\[
\begin{aligned}
A_{b,q}(\bb{s})
&= \frac{q^{q(1 - \rVert \bb{s}\rVert_1)/b + 1/2} \prod_{i\in [d]} q^{qs_i/b + 1/2}}{(2\pi)^{(q - 1)d/2} \{(1 - \rVert \bb{s}\rVert_1)/b\}^{(q - 1)/2} \prod_{i\in [d]} (s_i/b)^{(q - 1)/2}} \times \frac{e^{-(q - 1)d} (1/b + d)^{q/b + qd + q/2}}{(q/b + d)^{q/b + d + 1/2}} \times S_{b,q}(\bb{s}) \notag \\
&= \frac{b^{(q - 1) (d + 1)/ 2} (1/b + d)^{(q - 1)(d + 1/2)} q^{-d/2}}{(2\pi)^{(q - 1)d/2} (1 - \lVert \bb{s}\rVert_1)^{(q - 1)/2} \prod_{i\in [d]} s_i^{(q - 1)/2}} \times e^{-(q - 1)d} \bigg(\frac{q/b + qd}{q/b + d}\bigg)^{q/b + d + 1/2} \times S_{b,q}(\bb{s}).
\end{aligned}
\]

Therefore, by Stirling's approximation and the fact that $R$ is increasing on $(1,\infty)$ \citep[p.~14]{MR4319409}, we see that, as $b\to 0$,
\[
S_{b,q}(\bb{s}) = 1 + \OO_{\bb{s}}(b),
\]
and
\[
0 < S_{b,q}(\bb{s}) \leq \frac{R(q/b + d)}{R^q(1/b + d)} = 1 + \OO(b).
\]
Furthermore, a standard exponential approximation yields
\[
\begin{aligned}
\bigg(\frac{q/b + qd}{q/b + d}\bigg)^{q/b + d + 1/2}
&= \bigg\{1 + \frac{(q - 1)d}{q/b + d}\bigg\}^{q/b + d + 1/2} \\
&= e^{(q - 1)d} \{1 + \OO(b)\}.
\end{aligned}
\]
It follows from the last three equations that
\[
\begin{aligned}
A_{b,q}(\bb{s})
&= \frac{b^{-(q - 1)d/2} (1 + b d)^{(q - 1)(d + 1/2)} \{1 + \OO_{\bb{s}}(b)\}}{(2\pi)^{(q - 1)d/2} q^{d/2} (1 - \rVert \bb{s}\rVert_1)^{(q - 1)/2} \prod_{i\in [d]} s_i^{(q - 1)/2}} \\
&= \frac{b^{-(q - 1)d/2} \psi^{q - 1}(\bb{s})}{2^{-(q - 1)d/2} q^{d/2}} \{1 + \OO_{\bb{s}}(b)\}.
\end{aligned}
\]
Given that $(q - 1)/q = 1/p$, we deduce
\[
\begin{aligned}
\rVert \kappa_{\bb{s},b}\rVert_q^2
&= \bigg\{\int_{\mathcal{S}_d} \kappa_{\bb{s},b}^q(\bb{x}) \rd \bb{x}\bigg\}^{2/q} \\
&= \{A_{b,q}(\bb{s})\}^{2/q} \\
&= \frac{b^{-d/p} \psi^{2/p}(\bb{s})}{2^{-d/p} q^{d/q}} \{1 + \OO_{\bb{s}}(b)\}.
\end{aligned}
\]
This concludes the proof.
\end{proof}

The second lemma gives a uniform upper bound on the Dirichlet kernel $\bb{x}\mapsto \kappa_{\bb{s},b}(\bb{x})$ in $\mathcal{S}_d$.

\begin{lemma}\label{lem:local.bound}
Recall the definition of the function $\psi$ in Theorem~\ref{thm:MSE}. For any given $\bb{s}\in \mathcal{S}_d$, we have, as $b\to 0$,
\[
\max_{\bb{x}\in \mathcal{S}_d} \kappa_{\bb{s},b}(\bb{x}) \ll b^{-d/2} \psi(\bb{s}).
\]
\end{lemma}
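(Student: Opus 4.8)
The plan is to bound $\kappa_{\bb{s},b}(\bb{x}) = K_{\bb{s}/b+\bb{1},(1-\lVert \bb{s}\rVert_1)/b+1}(\bb{x})$ uniformly over $\bb{x}\in\mathcal{S}_d$ by locating the mode of this Dirichlet density and evaluating the density there, since the claimed bound is precisely the order of $\kappa_{\bb{s},b}$ at its mode. First I would write out the explicit form
\[
\kappa_{\bb{s},b}(\bb{x}) = \frac{\Gamma(1/b + d + 1)}{\Gamma((1-\lVert \bb{s}\rVert_1)/b + 1)\prod_{i\in[d]}\Gamma(s_i/b+1)}\,(1-\lVert \bb{x}\rVert_1)^{(1-\lVert \bb{s}\rVert_1)/b}\prod_{i\in[d]} x_i^{s_i/b},
\]
and note that the product of power functions $(1-\lVert \bb{x}\rVert_1)^{(1-\lVert \bb{s}\rVert_1)/b}\prod_i x_i^{s_i/b}$ is maximized over $\bb{x}\in\mathcal{S}_d$ exactly at $\bb{x}=\bb{s}$ (a standard Lagrange-multiplier / weighted-AM--GM computation: the log is a concave function whose gradient vanishes at $\bb{x}=\bb{s}$). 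Plugging in $\bb{x}=\bb{s}$ gives the closed-form maximum $(1-\lVert \bb{s}\rVert_1)^{(1-\lVert \bb{s}\rVert_1)/b}\prod_{i\in[d]} s_i^{s_i/b}$.

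The remaining task is to show that the normalizing constant, multiplied by this maximal power-product value, is $\OO_{\bb{s}}(b^{-d/2}\psi(\bb{s}))$. I would apply Stirling's formula in the form $\Gamma(z+1) = \sqrt{2\pi z}\,(z/e)^z\,(1+\OO(1/z))$ to each of the $d+2$ gamma factors, with $z$ equal to $1/b+d$, $(1-\lVert \bb{s}\rVert_1)/b$, and $s_i/b$ for $i\in[d]$ respectively. The exponential factors $e^{-z}$ combine to $e^{-1/b}$ on top and $e^{-1/b}$ on the bottom (since $(1-\lVert \bb{s}\rVert_1)/b + \sum_i s_i/b = 1/b$), so they cancel up to a bounded factor coming from the additive constants $d$. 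The power-of-$z$ factors combine with the power-product $(1-\lVert \bb{s}\rVert_1)^{(1-\lVert \bb{s}\rVert_1)/b}\prod_i s_i^{s_i/b}$ so that, after writing $((1-\lVert \bb{s}\rVert_1)/b)^{(1-\lVert \bb{s}\rVert_1)/b} = (1/b)^{(1-\lVert \bb{s}\rVert_1)/b}(1-\lVert \bb{s}\rVert_1)^{(1-\lVert \bb{s}\rVert_1)/b}$ and similarly for the $s_i$, the surviving base $(1/b)^{\,\cdot}$ terms on the denominator collect to $(1/b)^{1/b}$, matching the numerator's $(1/b)^{1/b}$, so these too cancel up to $\OO(1)$. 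What is left is the ratio of the $\sqrt{2\pi z}$ prefactors, namely
\[
\frac{\sqrt{2\pi(1/b)}}{\sqrt{2\pi(1-\lVert \bb{s}\rVert_1)/b}\,\prod_{i\in[d]}\sqrt{2\pi s_i/b}} = \frac{(2\pi)^{-d/2} b^{-d/2}}{(1-\lVert \bb{s}\rVert_1)^{1/2}\prod_{i\in[d]} s_i^{1/2}} = b^{-d/2}\,2^{d/2}\,\psi(\bb{s}),
\]
recalling $\psi(\bb{s}) = (4\pi)^{-d/2}/\{(1-\lVert \bb{s}\rVert_1)^{1/2}\prod_{i\in[d]} s_i^{1/2}\}$. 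This yields exactly $\max_{\bb{x}\in\mathcal{S}_d}\kappa_{\bb{s},b}(\bb{x}) = \OO_{\bb{s}}(b^{-d/2}\psi(\bb{s}))$, which is the claimed $\ll b^{-d/2}\psi(\bb{s})$.

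The main obstacle is organizational rather than conceptual: keeping careful track of which factors of $(1/b)^{1/b}$, $e^{-1/b}$, and power-products cancel, and confirming that the error terms $\OO(1/z)$ from Stirling remain uniform over $\bb{x}$ (they are, since the mode value does not depend on $\bb{x}$ once the maximization is done) and contribute only a multiplicative $1+\OO_{\bb{s}}(b)$. One subtlety to handle is the boundary behavior: the bound must hold for \emph{any} $\bb{s}\in\mathcal{S}_d$, including $\bb{s}$ on the boundary where some $s_i=0$ or $\lVert \bb{s}\rVert_1=1$; in those cases the corresponding Dirichlet parameter equals $1$ rather than blowing up, the kernel degenerates (a component is deterministically $0$), and the bound becomes vacuous or is handled by the convention that $\psi(\bb{s})=\infty$ there, so the interesting content is genuinely the interior estimate carried out above — indeed this is exactly the computation already performed in Lemma~\ref{lem:kappa.L.q.norm} with $q=1$, from which the result can alternatively be read off since $\max_{\bb{x}}\kappa_{\bb{s},b}(\bb{x}) = \kappa_{\bb{s},b}(\bb{s})$ and $\{A_{b,1}(\bb{s})\}$ type asymptotics give the mode value directly.
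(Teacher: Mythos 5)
Your overall strategy is the right one, and it is essentially the argument behind the result the paper relies on: the paper's own ``proof'' of Lemma~\ref{lem:local.bound} is a one-line citation to Lemma~2 of \citet{MR4319409}, where precisely this computation (the mode of $\kappa_{\bb{s},b}$ is at $\bb{x}=\bb{s}$, then Stirling applied to the normalizing constant) is carried out. Your identification of the maximizer, the boundary discussion, and the final answer $\kappa_{\bb{s},b}(\bb{s}) = 2^{d/2}\,b^{-d/2}\,\psi(\bb{s})\,\{1+\OO_{\bb{s}}(b)\}$ are all correct, and since $2^{d/2}$ depends only on $d$, this indeed gives the subscript-free bound $\ll b^{-d/2}\psi(\bb{s})$.

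However, your intermediate bookkeeping contains two errors that happen to cancel, so the displayed chain is internally inconsistent even though the end result is right. First, the base-$(1/b)$ power terms do \emph{not} ``cancel up to $\OO(1)$'': Stirling applied to the numerator gives $(1/b+d)^{1/b+d} = (1/b)^{1/b}\,(1/b)^{d}\,(1+bd)^{1/b+d}$, whereas the denominator (after absorbing the mode value $(1-\lVert\bb{s}\rVert_1)^{(1-\lVert\bb{s}\rVert_1)/b}\prod_{i\in[d]} s_i^{s_i/b}$) contributes only $(1/b)^{1/b}$; the leftover is of order $b^{-d}e^{d}$, and the $e^{d}$ is what cancels the residual $e^{-d}$ from the exponential factors. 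Second, the ratio of the $\sqrt{2\pi z}$ prefactors equals $(2\pi)^{-d/2}\,b^{+d/2}/\{(1-\lVert\bb{s}\rVert_1)^{1/2}\prod_{i\in[d]} s_i^{1/2}\}$, i.e.\ it carries $b^{+d/2}$, not the $b^{-d/2}$ you wrote (note also that the numerator's prefactor should be $\sqrt{2\pi(1/b+d)}$, not $\sqrt{2\pi/b}$, though this only affects the $1+\OO(b)$ factor). Multiplying the genuine leftovers, $b^{-d}\cdot b^{d/2} = b^{-d/2}$, recovers your stated conclusion, so the proof is salvageable by redoing the accounting, but as written the two cancellation claims are false and, taken literally, would yield $b^{+d/2}$ rather than $b^{-d/2}$. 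Finally, the closing aside about reading the result off Lemma~\ref{lem:kappa.L.q.norm} with $q=1$ does not work: $A_{b,1}(\bb{s})\equiv 1$ (it is just the integral of the kernel), and that lemma controls $\lVert\kappa_{\bb{s},b}\rVert_q$ for $q\in(1,\infty)$, not the supremum; what is true is that the same $R$-function/Stirling machinery used in its proof can be recycled verbatim for the pointwise evaluation at the mode.
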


\begin{proof}
See Lemma~2 of \citet{MR4319409}.
\end{proof}

%\backmatter

\section*{Reproducibility}

The \texttt{R} code used for the real-data application in Section~\ref{sec:application} can be accessed in the GitHub repository of \citet{DaayebOuimetKhardani2025GitHub}. The dataset \texttt{BDDSegX.RData} is publicly available in the GitHub repository of \citet{BarreiroLaurentThomasAgnan2021}.

\section*{Acknowledgments}
\addcontentsline{toc}{section}{Acknowledgments}

We thank Christine Thomas-Agnan for kindly providing the dataset \texttt{BDDSegX.RData} used in Section~\ref{sec:application}. We thank Christian Genest for carefully reading and editing a preliminary version of the manuscript.

\section*{Funding}
\addcontentsline{toc}{section}{Funding}

Fr\'ed\'eric Ouimet's research was partially funded through the Canada Research Chairs Program (Grant 950-231937 to Christian Genest) and the Natural Sciences and Engineering Research Council of Canada (Grant RGPIN-2024-04088 to Christian Genest). Fr\'ed\'eric Ouimet's previous postdoctoral fellowship was funded through the Natural Sciences and Engineering Research Council of Canada (Grant RGPIN-2024-05794 to Anne MacKay).

\addcontentsline{toc}{chapter}{References}

\bibliographystyle{authordate1}
\bibliography{bib}

\end{document}